\documentclass[12pt,english]{article}
\usepackage[margin=1.5in]{geometry}

\usepackage{amssymb,amsbsy,latexsym}
\usepackage{amsmath}
\usepackage{url}

\usepackage[T1]{fontenc} 
\usepackage{fourier}
\usepackage{bm}

\usepackage{amscd,amsthm}

\usepackage{verbatim, comment}

\newtheoremstyle{theorem}{1em}{1em}{\slshape}{0pt}{\bfseries}{.}{ }{}
\theoremstyle{theorem}
\newtheorem{theorem}{Theorem}
\newtheorem{conjecture}{Conjecture}
\newtheorem{corollary}[theorem]{Corollary}
\newtheorem{proposition}[theorem]{Proposition}
\newtheorem{lemma}[theorem]{Lemma}
\newtheorem{definition}[theorem]{Definition}
\newtheorem*{lemma*}{Lemma}
\theoremstyle{remark}

\newcommand{\E}{\mathop{\mathbb{E}}}

\newcommand{\R}{\mathbb R}
\newcommand{\eps}{\varepsilon}
\newcommand{\ip}[2]{\langle #1,#2\rangle}
\newcommand{\abs}[1]{\left|#1\right|}
\newcommand{\norm}[1]{\left\|#1\right\|}

\newcommand{\wg}{w}
\DeclareMathOperator{\codim}{codim}
\DeclareMathOperator{\spanop}{span}
\DeclareMathOperator{\vol}{vol}
\DeclareMathOperator{\vr}{vr}
\DeclareMathOperator{\conv}{conv}

\usepackage{calrsfs} 
\DeclareMathAlphabet{\pazocal}{OMS}{zplm}{m}{n}

\usepackage[hidelinks]{hyperref}
\makeatother

\title{Optimal Vector Balancing for Zonotopes}

\author{Victor Reis\thanks{Microsoft Research, Redmond. Email:
\href{mailto:victorol@microsoft.com}{\texttt{victorol@microsoft.com}}.}}
\date{}
\begin{document}

\maketitle

\begin{abstract}
A \textit{zonotope} is a linear image of the cube $[-1,1]^m$ for some $m \in \mathbb{N}$. We show that there is a universal constant $C>0$ such that, for every
zonotope $Z\subset \mathbb{R}^d$ and vectors
$v_1,\dots,v_n\in Z$, there are signs $x_1,\dots,x_n\in\{-1,1\}$ with
\[
  \sum_{i=1}^n x_i v_i \in C\sqrt d\, Z.
\] This resolves a 2002 question of Schechtman
and generalizes Spencer's six standard deviations theorem,
which corresponds to the case $Z=[-1,1]^d$.
\end{abstract}

\section{Introduction}

Spencer's \textit{six standard deviations theorem}~\cite{Spencer1985} states that for every choice of vectors $v_1, \dots, v_d \in [-1,1]^d$, there are signs $x_1, \dots, x_d \in \{-1,1\}$ so that 
\[ \Big\|\sum_{i=1}^d x_i v_i\Big\|_\infty  \le 6\sqrt d.
\]
In 2002, Schechtman asked whether a $C \sqrt{d}$ bound holds for vectors in an
arbitrary \textit{zonotope} $Z = A^\top [-1,1]^m \subset \mathbb{R}^d$. This question appears as the first problem in \emph{Open problems on embeddings
of finite metric spaces}~\cite[Problem~2.1]{MatousekNaor2003}; see
also~\cite[Problem~1.1]{AIMFourierConvex2007}.  Our main theorem answers it affirmatively.
\begin{theorem}\label{thm:main-vb}
There is a universal constant $C>0$ such that the following holds. Let
$Z\subset \R^d$ be a zonotope. Then, for every integer $n \in [1,d]$ and all
$v_1,\ldots,v_n\in Z$, there exist polynomial-time computable signs
$x_i\in\{-1,1\}$ such that
\[
  \sum_{i=1}^n x_i v_i
  \in C\sqrt{n\log\Big(\frac{2d}{n}\Big)}\,Z.
\]
\end{theorem}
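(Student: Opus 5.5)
The plan is to reduce to a partial-coloring lemma and iterate. Write $Z = A^\top[-1,1]^m$ with rows $a_1,\dots,a_m\in\R^d$ of $A$, so that $y\in tZ$ iff $y = A^\top u$ for some $u$ with $\|u\|_\infty\le t$. The norm $\|\cdot\|_Z$ is not a max of $m$ linear functionals (it is the dual: $\|y\|_Z = \min\{\|u\|_\infty : A^\top u = y\}$), so Spencer's entropy argument does not apply verbatim. First, I will normalize the zonotope. By a John-type / Lewis-position argument for zonotopes, replace $A$ by a matrix with $m = O(d\log d)$ or even $O(d)$ rows (up to constant-factor approximation of $Z$, via Talagrand's sparsification of zonotopes, or better, by working directly with Lewis weights) and place $Z$ in a position where the rows satisfy $\sum_j a_j a_j^\top = I_d$-type isotropy with each $\|a_j\|_2^2$ comparable to $d/m$.

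Second, I will lift the problem. Each $v_i\in Z$ is $v_i = A^\top u_i$ with $\|u_i\|_\infty\le 1$. A naive lift to the $u_i\in[-1,1]^m$ and Spencer in dimension $m$ gives only $\sqrt{n\log(2m/n)}$ plus a dependence on $m$; the point is to exploit the freedom in choosing $u$. The goal is to find signs $x$ and a correction $w\in\ker A^\top$ with $\|\sum_i x_i u_i + w\|_\infty \le C\sqrt{n\log(2d/n)}$. By duality (Hahn–Banach), $\|y\|_Z\le t$ iff $\langle y,\theta\rangle\le t\|A\theta\|_1$ for all $\theta$, i.e.\ we need to control $\sup_{\theta}\langle \sum x_iv_i,\theta\rangle/\|A\theta\|_1$, a supremum over the polar $Z^\circ = \{\theta : \|A\theta\|_1\le 1\}$, which is the unit ball of an $m$-row $\ell_1$-norm. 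So the task is: a random-ish coloring $X=\sum x_i v_i$ must satisfy $\sup_{\theta\in Z^\circ}\langle X,\theta\rangle\lesssim\sqrt{n\log(2d/n)}$.

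Third, partial coloring via Gaussian measure. I will use the Gluskin/Giannopoulos–Rothvoss style partial coloring lemma: if a symmetric convex body $K\subseteq\R^n$ has Gaussian measure $\ge e^{-cn}$, then there is $x\in[-1,1]^n\cap K$ with a constant fraction of coordinates in $\{\pm1\}$ (constructively by Rothvoss's or Lovett–Meka-type algorithm, giving polynomial time). Take $K=\{x : \sum x_iv_i\in t Z\}$ restricted to the live coordinates. The needed estimate is $\gamma_n(K)\ge e^{-cn}$ for $t\asymp\sqrt{n\log(2d/n)}$. By the Gaussian measure of a zonotope slice, it suffices to bound the $\ell$-norm / mean width: $G=\sum g_iv_i$ satisfies $\E\|G\|_Z\lesssim\sqrt{n\log(2d/n)}$ after restricting to a subspace of codimension a small fraction of $n$ (Milman-type, or via Šidák/Gaussian correlation together with a covering of $Z^\circ$). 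For zonotopes, the key estimate is that $Z^\circ$ has small covering numbers: the $\ell_1$-type structure gives $\log N(Z^\circ, \epsilon\, \text{ellipsoid})\lesssim d\,\epsilon^{-2}$-type bounds (Maurey's empirical method on the $\ell_1$ ball image), and Dudley/Sudakov combined with restriction to a subspace of dimension $n$ spanned by the $v_i$ yields $\log(2d/n)$ rather than $\log m$. Then iterate over $O(\log n)$ rounds, with $n$ halving; the bounds $\sqrt{n_k\log(2d/n_k)}$ form a geometric series summing to $O(\sqrt{n\log(2d/n)})$.

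The main obstacle is the Gaussian measure lower bound $\gamma_n(K)\ge e^{-cn}$ with $t\asymp\sqrt{n\log(2d/n)}$ independent of $m$: for the cube this is a union bound over $d$ facets, but a general zonotope has unboundedly many facets and its polar is not a polytope with few vertices. I would first try to reduce $m$ to $O(d)$ by zonotope sparsification up to constant factors (Talagrand gives $O(d\log d)$, which loses a $\log\log$; one may need an $O(d)$ Lewis-weight-based sparsification), and then combine the cube-type union bound on a suitable Lewis position with Maurey sparsification to handle the residual terms.
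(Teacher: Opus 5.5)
Your outer framework matches the paper's Section 6. You view the balancing body through $Z^\circ=\{\theta:\norm{A\theta}_1\le1\}$, you get a partial coloring from $\gamma_n(K)\ge e^{-cn}$, and you iterate with halving so the costs $\sqrt{n_j\log(2d/n_j)}$ sum geometrically. But the whole content of the theorem is that Gaussian-measure bound at scale $t\asymp\sqrt{n\log(2d/n)}$ with no dependence on $m$. You do not prove it, and none of the routes you suggest can close it.

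\emph{Sparsification.} Reducing to $O(d)$ generators up to constant factors is exactly Schechtman's 1987 sparsification problem, which is still open (Conjecture~\ref{conj:schechtman-sparsification}). Talagrand's $O(d\log d)$ sparsification only yields $O(\sqrt{d\log\log d})$.

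\emph{\v{S}id\'ak over a net.} Using \v{S}id\'ak's inequality over a single-scale net of $Z^\circ$ needs $e^{\Theta(d)}$ slabs, and then only gives $t\asymp\sqrt{nd}$.

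\emph{Mean width.} The reduction ``it suffices to bound $\E\norm{G}_Z$ after restricting to a subspace of small codimension'' is false already for the cube. Take $d=n$, $Z=[-1,1]^n$, and let $v_i$ be the rows of an $n\times n$ Hadamard matrix $W$ with columns $w_j$. For any subspace $H$ of dimension $h\ge\beta n$, one has $\norm{\sum_i(P_Hg)_iv_i}_\infty=\max_j\abs{\ip{g}{P_Hw_j}}$. The matrix $Q:=\frac1nW^\top P_HW$ is a rank-$h$ orthogonal projection, so $\sum_jQ_{jj}=h$ and $\sum_{j\ne k}Q_{jk}^2\le h$. Hence $\gtrsim_\beta n$ of the points $P_Hw_j$ are pairwise $\gtrsim\sqrt{\beta n}$ apart. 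Sudakov's minoration then gives an expectation $\gtrsim_\beta\sqrt{n\log n}$, not $O(\sqrt n)$. A mean-width bound certifies Gaussian measure $\ge 1/2$, whereas Spencer-type bounds live precisely where the measure is only $e^{-cn}$.

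\emph{Maurey.} $Z^\circ$ is a \emph{section} $B_1^m\cap F$ of the $\ell_1$ ball, not an image of it. So Maurey's empirical method does not apply to $Z^\circ$ or to its images $VZ^\circ$.

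The missing idea is to control \emph{volume}, hereditarily over coordinate subsets, instead of width. By Theorem~\ref{thm:rr-coordinate-volume}, it suffices to have $\vol_k(K_S)^{1/k}\gtrsim1/\sqrt{k\log(2d/k)}$ for every $S$. Since $K_S=(M_S(B_1^m\cap F))^\circ$ (Lemma~\ref{lem:polar}), inverse Santal\'o reduces this to $\vol_k(M_S(B_1^m\cap F))^{1/k}\lesssim\sqrt{\log(2d/k)/k}$.

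That estimate follows from the new volume-ratio bound $\vr(M(B_1^m\cap F))\lesssim\sqrt{\log(2d/n)}$ (Theorem~\ref{thm:vr-quotient-geometric}). Hadamard's inequality, using $u_i\in B_\infty^m$, then fixes the scale of the John ellipsoid. The volume-ratio bound is proved in four steps:
\begin{itemize}
\item put $Z^\circ$ in Lewis position $K_1$;
\item bound section radii $r_k(MK_1)\lesssim\norm{M}_{\pazocal B_n(K_1)}\sqrt{\log(2d/k)/k}$ by Gordon's escape theorem, using a localized width bound for $K_1$ and interpolation bodies $K_p$ with $\Gamma(K_p)\lesssim\sqrt q$ to pass to images;
\item convert these to entropy numbers with Carl's inequality;
\item use John position to get $\norm{M}_{\pazocal B_n}\le\sqrt n$.
\end{itemize}
None of this requires reducing $m$.
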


Previously, the best known bound for $n = d$ was $O(\sqrt d\,\log\log\log d)$~\cite{HeckReisRothvoss2023}. For the cube $Z=[-1,1]^d$, this recovers Spencer's bound and is optimal up to universal constants for every $n$ and $d$~\cite[Theorem~71]{Reis2023Thesis}. A standard linear-algebraic argument shows that for $n>d$, the bound is the same as the $n=d$ bound up to a factor of two~\cite[Theorem~3]{Reis2023Thesis}.

There is a natural route to this statement through zonotope sparsification.
Talagrand proved that there exists a zonotope $\widetilde{Z} = \tilde{A}^\top [-1,1]^{\widetilde{m}}$ with $\widetilde{m} = O(\frac{d}{\eps^2}\log d)$ and $\widetilde{Z} \subseteq Z \subseteq (1+\eps) \widetilde{Z}$~\cite{Talagrand1990}.  A 1987 problem of Schechtman~\cite[Problem~7]{Schechtman1987} (see also~\cite[Problem~1.12]{AIMFourierConvex2007}) asks whether the logarithmic factor can be removed: does every
$d$-dimensional zonotope admit such an approximation with $\widetilde{m} = d \cdot C(\eps)$?  

Spencer~\cite{Spencer1985} proved more generally that for vectors $u_1, \dots, u_n \in [-1,1]^m$, there are signs $x_1, \dots, x_n \in \{-1,1\}$ so that 
\[ \Big\|\sum_{i=1}^n x_i u_i\Big\|_\infty  \le 11\sqrt {n \log\Big(\frac{2m}{n}\Big)}.
\]

Thus assuming an approximation $\widetilde{Z}$ with $\widetilde{m} = d \cdot C(\eps)$ exists for constant $\eps$, given $v_1, \dots, v_d \in Z$, we may write $v_i = (1+\eps) \tilde{A}^\top u_i$  and apply Spencer's theorem to $u_i \in [-1,1]^{\widetilde{m}}$, yielding $\|\sum_{i=1}^{d} x_i u_i\|_\infty \le  11\sqrt {d \log\Big(\frac{2\widetilde{m}}{d}\Big)} = 11\sqrt{1 + \log C(\eps)} \sqrt{d}$ and consequently $\sum_{i=1}^d x_i v_i  \in 11(1+\eps)\sqrt{1 + \log C(\eps)} \sqrt{d} Z$. Therefore, this would yield an affirmative answer to Schechtman's zonotope vector balancing question. Alas, the bound  $\widetilde{m} = O(\frac{d}{\eps^2}\log d)$ due to Talagrand remains the state of the art for zonotope sparsification, so this approach currently only leads to an $O(\sqrt{d \log \log d})$ vector balancing bound.

\subsection{Proof overview}

We reduce the problem to a volumetric bound on quotients of sections of the \(\ell_1\) ball. In Section 2, we recall the \textit{Lewis position} of the polar of a zonotope, which we denote by $K_1$, along with other concepts that will be needed later in the proof. In Section 3, we apply \textit{Gordon's escape theorem} to find sections of $K_1$ with small radius. In Section 4, we generalize this to sections of linear images $MK_1$. For this, we will need to interpolate between $K_1$ and the Euclidean ball. To deal with the linear maps, we use the fact that linear maps preserve coverings, and bound entropy numbers via \textit{Carl's inequality}. In Section 5, we pass from the Lewis position to arbitrary sections of the \(\ell_1\) ball, apply Carl's inequality again to bound entropy numbers of quotients of sections, and derive the volumetric estimates.
Finally, in Section 6 we identify the convex body associated with balancing vectors on a zonotope with the quotient of a section of the \(\ell_1\) ball and construct a partial coloring, which can be iterated to achieve the optimal vector balancing bound.

We use $A \lesssim B$ to denote that there exists a universal constant $C$ with ${A \le C \cdot B}$. For $p\in[1,\infty]$, $B_p^d$
is the unit ball of $\ell_p^d$, and $\log$ denotes the binary logarithm.

\subsection{Other related work}\label{sec:related-work}

Theorem~\ref{thm:main-vb} was previously known in the special case where $Z = B^d_p$, $p \in [2, \infty]$, for which the tight bound is $\Theta(\sqrt{n \min\{p, \log(2d/n)\}})$~\cite{ReisRothvoss2022}. While these are not strictly zonotopes for $p < \infty$, they are \textit{zonoids}, which can be approximated arbitrarily well by a zonotope, so Theorem~\ref{thm:main-vb} still applies.

Suppose the vectors satisfy $v_1, \dots, v_n \in \{0,1\}^d$ with at most $t$ ones each.  Beck and Fiala~\cite{IntegerMakingTheorems-BeckFiala81}
proved, using a linear-algebraic argument, that there are signs with $\|\sum_{i=1}^n x_i v_i\|_\infty \le 2t$ and conjectured an $O(\sqrt{t})$ bound. A recent work of Bansal and Jiang~\cite{BansalJiang2025} resolved the Beck--Fiala conjecture for $t \ge \log^2 n$ and obtained a bound of $(\sqrt{t} + \sqrt{\log n}) \cdot (\log \log n)^{O(1)}$ for smaller values of $t$; when $t \le \sqrt{\log n}$, the best bound is still $2t-\log^*(t)$~\cite{bukh_2016}. 

A stronger conjecture is due to Koml\'os, who asked whether for $v_1, \dots, v_n \in B_2^d$ there exist signs with $\|\sum_{i=1}^n x_i v_i\|_\infty \le O(1)$. This would also generalize Spencer's theorem, and the best known bound of $(\log^{1/4} n) \cdot (\log \log n)^{O(1)}$ is also due to Bansal and Jiang~\cite{BansalJiang2025}.

Another open generalization of Spencer's theorem is the \textit{Matrix Spencer conjecture}~\cite{Zouzias2012, MekaBlog2014} which asks whether for symmetric matrices $A_1,\ldots,A_n \in \R^{n \times n}$ with eigenvalues in $[-1,1]$, there are
signs $x \in \{-1,1\}^n$ so that the maximum singular value of $\sum_{i=1}^n x_iA_i$ is at most $O(\sqrt{n})$. This conjecture has only been proved under the additional assumption that the matrices are
block-diagonal with constant-size blocks~\cite{DBLP:conf/stoc/DadushJR22}, or have rank at most $\frac{n}{\log^3 n}$~\cite{BansalJiangMeka-MatrixSpencerUpToPolylog-2022-Arxiv}.

Regarding the zonotope sparsification problem, Cohen and Peng~\cite{CohenPeng2015} made Talagrand's $O(\frac{d}{\eps^2}\log d)$ sparsity bound algorithmic. Linear-size sparsification has only recently been obtained for $O(1)$-modular zonotopes~\cite{EisenbrandRothvossRussoSkorupinski2026}, and Talagrand's bound remains the state of the art in general.

\section{Preliminaries}\label{sec:prelim}

A symmetric convex body $K \subset \R^d$ is a compact convex set with nonempty interior so that $K=-K$.  Its associated norm is $\|x\|_K:=\inf\{t>0:x\in tK\}$, its polar is $K^\circ:=\{y\in\R^d: |\ip{x}{y}|\le 1 \forall x\in K\}$, and $\|y\|_{K^\circ}:=\sup_{x\in K}|\ip{x}{y}|$ is the polar norm. When a symmetric convex set \(K\) is contained in a proper subspace, its norm and its polar are also considered to be taken inside that subspace.

\subsection{Lewis position, John decomposition, and volume ratio}\label{subsec:lewis-position}

Throughout, fix a full-dimensional zonotope $Z = A^\top B^m_\infty \subset \mathbb{R}^d$, where $A\in\R^{m\times d}$ has rank $d$, and all rows $A_1,\ldots,A_m$ are nonzero. We normalize the polar of $Z$ so that the zonotope is in \textit{Lewis position}~\cite{Lewis78}. Observe that
\[
  Z^\circ =\Big\{x\in\R^d:\sum_{i=1}^m |\ip{x}{A_i}|\le1\Big\}.
\]

\begin{theorem}[{\cite[Lemma~9]{Ball1991}}]\label{thm:lewis-rep}
There are unit vectors
$u_1,\ldots,u_m\in \partial B_2^d$, weights $c_1,\ldots,c_m>0$, and an invertible linear map $T:\R^d\to\R^d$ such that
\begin{equation}\tag{$\dagger$}\label{eq:isotropic}
  K_1 := TZ^\circ
  =\Big\{x\in\R^d:\sum_{i=1}^m c_i|\ip{x}{u_i}|\le1\Big\}, \qquad  \sum_{i=1}^m c_i\,u_i u_i^\top=I_d.
  \end{equation}
\end{theorem}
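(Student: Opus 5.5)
We obtain the representation from a variational problem over linear maps, in the spirit of Lewis' lemma. If $S$ is invertible, then $S^{-\top}Z^\circ=\{y:\sum_i|\ip{y}{SA_i}|\le 1\}$, since $\ip{S^{-\top}x}{SA_i}=\ip{x}{A_i}$. So it suffices to find an invertible $S$ with
\[
\sum_{i=1}^m \frac{(SA_i)(SA_i)^\top}{\|SA_i\|_2}=I_d .
\]
Given such an $S$, set $u_i:=SA_i/\|SA_i\|_2$, $c_i:=\|SA_i\|_2>0$ (the rows $A_i$ are nonzero), and $T:=S^{-\top}$. Then $\sum_i c_i|\ip{x}{u_i}|=\sum_i|\ip{x}{SA_i}|$, which gives both halves of \eqref{eq:isotropic}.

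To find $S$, we look at positive definite $Q=S^\top S$. Define
\[
f(Q):=\sum_{i=1}^m \sqrt{A_i^\top Q A_i}=\sum_i\|SA_i\|_2 .
\]
We minimize $f(Q)$ subject to $\det Q=1$. Existence of a minimizer is the step requiring care.

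\begin{itemize}
\item Since $A$ has rank $d$, the rows $A_i$ span $\R^d$.
\item Hence $f(Q)\ge c\,\lambda_{\max}(Q)^{1/2}$ for some $c>0$ depending only on $A$, because $\max_i \ip{A_i}{v}^2\gtrsim 1$ for every unit $v$.
\item On the constraint set $\det Q=1$, bounded $\lambda_{\max}$ forces $\lambda_{\min}$ to be bounded below, since $\lambda_{\min}\ge \lambda_{\max}^{-(d-1)}$.
\item So each sublevel set is a compact subset of the positive definite cone, and $f$ is continuous there. A minimizer $Q_0=S_0^\top S_0$ therefore exists.
\end{itemize}

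Next comes the first-order condition. Perturb by $Q=S_0^\top e^{tH}S_0$ with $H$ symmetric and $\operatorname{tr}H=0$; this preserves the determinant. Differentiating at $t=0$ gives
\[
\tfrac12\sum_i \frac{\ip{S_0A_i}{HS_0A_i}}{\|S_0A_i\|_2}=0
\]
for every traceless symmetric $H$. Write $M:=\sum_i (S_0A_i)(S_0A_i)^\top/\|S_0A_i\|_2$, so this reads $\operatorname{tr}(MH)=0$ for all such $H$. A symmetric matrix orthogonal to every traceless symmetric matrix is a multiple of the identity, so $M=\lambda I$. Taking traces gives $\lambda d=\sum_i\|S_0A_i\|_2=f(Q_0)>0$. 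Finally rescale: with $S:=\lambda^{-1/2}S_0$, each term $(SA_i)(SA_i)^\top/\|SA_i\|_2$ picks up a factor $\lambda^{-1}\cdot\lambda^{-1}\cdot\lambda^{1/2}\cdot\lambda^{1/2}\cdots$; more simply, it scales like $\|SA_i\|_2$, i.e. by $\lambda^{-1/2}$.

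That scaling would give $\lambda^{1/2}I$, not $I$, so the rescaling must be chosen differently. Since $M$ is $1$-homogeneous in $S$, take $S:=\lambda^{-1}S_0$; then $M$ becomes $\lambda^{-1}\cdot\lambda I=I$, as required.

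The only genuine obstacle is the compactness/coercivity step, which is exactly where full rank and nonzero rows are used. Differentiability of $f$ is automatic, because $A_i^\top QA_i>0$ for nonzero $A_i$ and positive definite $Q$.
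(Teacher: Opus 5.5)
Your proof is correct. The determinant-normalized minimization of $\sum_i \|SA_i\|_2$ works: full rank gives coercivity, the traceless first-order condition forces $M=\lambda I$, and the $1$-homogeneous rescaling $S=\lambda^{-1}S_0$ finishes it (delete the false start with $\lambda^{-1/2}$). The paper gives no proof of its own and only cites Ball's Lemma~9, which rests on this same Lewis-type extremal argument (maximize the determinant subject to $\sum_i\|SA_i\|_2\le 1$), so your route is the standard one, written out self-containedly for the finite zonotope case.
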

In Sections 3 and 4, we put $Z$ in Lewis position and denote $K_1 := Z^\circ$.

This normalization is slightly different from the one in~\cite{HeckReisRothvoss2023}, where \textit{normalized zonotopes} are defined using Lewis weights.  One can check that if a zonotope $Z$ is in the above Lewis position, then for every $\eps>0$ there is a normalized zonotope $\widetilde Z$ with
$(1-\eps)\widetilde Z\subset Z\subset (1+\eps)\widetilde Z$.  We will not use this, as the Lewis position is more convenient. 

For $p \in [1,2]$, define the interpolation bodies
\[
K_p:=\{x\in\R^d:\sum_{i=1}^m c_i |\ip{x}{u_i}|^p\le1\}.
\]
Thus $K_2=B_2^d$. The next two lemmas follow directly from H\"older's inequality.

\begin{lemma}\label{lem:weighted-lewis-l1-lp-comparison}
Let $p \in (1,2]$, and put $q=p/(p-1)$.  Then, for every $x\in\R^d$,
\[
  \|x\|_{K_1}\le d^{1/q}\|x\|_{K_p}.
\]

\end{lemma}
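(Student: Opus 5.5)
We apply H\"older's inequality to $\|x\|_{K_1}=\sum_i c_i|\ip{x}{u_i}|$, splitting each weight as $c_i = c_i^{1/q}\cdot c_i^{1/p}$. With exponents $q$ and $p$ this gives
\[
  \sum_{i=1}^m c_i|\ip{x}{u_i}|
  = \sum_{i=1}^m c_i^{1/q}\cdot c_i^{1/p}|\ip{x}{u_i}|
  \le \Big(\sum_{i=1}^m c_i\Big)^{1/q}\Big(\sum_{i=1}^m c_i|\ip{x}{u_i}|^p\Big)^{1/p}.
\]

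It remains to evaluate $\sum_i c_i$. Taking the trace of the isotropic condition \eqref{eq:isotropic}, and using that each $u_i$ is a unit vector, we get
\[
  \sum_{i=1}^m c_i\|u_i\|_2^2=\operatorname{tr} I_d=d,
  \qquad\text{so}\qquad \sum_{i=1}^m c_i = d.
\]
Hence $\|x\|_{K_1}\le d^{1/q}\big(\sum_i c_i|\ip{x}{u_i}|^p\big)^{1/p}$ for every $x\in\R^d$.

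Finally, we identify the last factor with $\|x\|_{K_p}$. The functional $\phi_p(x):=\big(\sum_i c_i|\ip{x}{u_i}|^p\big)^{1/p}$ is a seminorm, being a weighted $\ell_p$ norm of the vector $(\ip{x}{u_i})_i$. It is in fact a norm, since $\phi_p(x)=0$ forces $\ip{x}{u_i}=0$ for all $i$, and then $x=\sum_i c_i\ip{x}{u_i}u_i=0$ by \eqref{eq:isotropic}. Its unit ball is exactly $K_p$, so $\phi_p(x)=\|x\|_{K_p}$ and the lemma follows. The argument is routine, and the only step needing care is the trace computation giving $\sum_i c_i=d$.
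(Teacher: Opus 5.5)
Your proposal is correct and is the same argument the paper has in mind: the paper only says the lemma "follows directly from H\"older's inequality," which is your weighted H\"older step with the split $c_i=c_i^{1/q}c_i^{1/p}$. You combine it, as the paper implicitly does, with $\sum_i c_i=d$ obtained by taking the trace of \eqref{eq:isotropic}.
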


\begin{lemma}\label{lem:weighted-lewis-local-interpolation}
Let $p \in (1,2]$, and put $q=p/(p-1)$.  Then, for every $x\in\R^d$,
\[
  \|x\|_{K_p}
  \le
  \|x\|_{K_1}^{1-2/q}\|x\|_2^{2/q}.
\]
\end{lemma}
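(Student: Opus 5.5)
We prove the inequality pointwise, as an application of H\"older's inequality to the weighted sum defining $\|x\|_{K_p}$. Throughout we use the two normalisation facts from the Lewis position \eqref{eq:isotropic}. The first is the isotropy identity $\sum_i c_i\ip{x}{u_i}^2=\|x\|_2^2$, which says $K_2=B_2^d$. The second is its trace, $\sum_i c_i = \operatorname{tr}(I_d)=d$, which uses $\|u_i\|_2=1$. Both sides of the inequality are $1$-homogeneous in $x$, and the case $x=0$ is trivial. So it suffices to compute $\sum_i c_i|\ip{x}{u_i}|^p$ in terms of the two known norms.

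First note that $\|x\|_{K_p}=\bigl(\sum_i c_i|\ip{x}{u_i}|^p\bigr)^{1/p}$, by homogeneity of the defining functional. In particular $\|x\|_{K_1}=\sum_i c_i|\ip{x}{u_i}|$ and $\|x\|_2^2=\sum_i c_i|\ip{x}{u_i}|^2$.

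We interpolate $p$ between $1$ and $2$. Write $p=\theta\cdot 1+(1-\theta)\cdot 2$, so $\theta=2-p$. Splitting $|\ip{x}{u_i}|^p=|\ip{x}{u_i}|^{\theta}\,|\ip{x}{u_i}|^{2(1-\theta)}$ and applying H\"older with respect to the measure with weights $c_i$, with exponents $1/\theta$ and $1/(1-\theta)$, gives
\[
\sum_i c_i|\ip{x}{u_i}|^p\le \|x\|_{K_1}^{\theta}\,\|x\|_2^{2(1-\theta)}.
\]
Taking $p$-th roots, the exponents become $\theta/p=(2-p)/p=2/p-1=1-2/q$ and $2(1-\theta)/p=2(p-1)/p=2/q$. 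This is exactly the claimed bound $\|x\|_{K_p}\le \|x\|_{K_1}^{1-2/q}\|x\|_2^{2/q}$. The endpoint $p=2$, where $\theta=0$, is the identity $K_2=B_2^d$.

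For the companion Lemma~\ref{lem:weighted-lewis-l1-lp-comparison}, we apply H\"older again in the weighted measure, now with exponents $p$ and $q$:
\[
\sum_i c_i|\ip{x}{u_i}|\le \Bigl(\sum_i c_i\Bigr)^{1/q}\Bigl(\sum_i c_i|\ip{x}{u_i}|^p\Bigr)^{1/p}=d^{1/q}\|x\|_{K_p},
\]
where the equality uses $\sum_i c_i=d$.

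The only point needing care is this total-mass identity $\sum_i c_i=d$, which comes from taking the trace in \eqref{eq:isotropic}. The rest is bookkeeping of exponents.
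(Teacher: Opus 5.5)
Your proof is correct and is the argument the paper means when it says the lemma follows directly from H\"older's inequality. You split $|\ip{x}{u_i}|^p=|\ip{x}{u_i}|^{2-p}|\ip{x}{u_i}|^{2(p-1)}$, apply H\"older in the $c_i$-weighted measure with exponents $1/(2-p)$ and $1/(p-1)$, use the isotropy identity $\sum_i c_i\ip{x}{u_i}^2=\|x\|_2^2$, and treat the endpoint $p=2$ separately, which gives the bound with the right exponents. (For this lemma only the isotropy identity is needed; the trace identity $\sum_i c_i=d$ enters only in the companion Lemma~\ref{lem:weighted-lewis-l1-lp-comparison}.)
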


We also use the following inclusions:

\begin{lemma}\label{lem:K1-subset-B2} One has $\frac1{\sqrt d}B_2^d\subseteq K_1\subseteq B_2^d$.
\end{lemma}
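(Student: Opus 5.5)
We prove the two inclusions separately, working directly from the isotropic decomposition \eqref{eq:isotropic}. Two consequences of it will be used. First, for every $x\in\R^d$,
\[
\|x\|_2^2=\ip{x}{\textstyle\sum_i c_i u_iu_i^\top x}=\sum_{i=1}^m c_i\ip{x}{u_i}^2 .
\]
Second, taking traces and using $\|u_i\|_2=1$,
\[
\sum_{i=1}^m c_i=\operatorname{tr}(I_d)=d .
\]

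\emph{Outer inclusion $K_1\subseteq B_2^d$.} Let $x\in K_1$, so $\sum_i c_i|\ip{x}{u_i}|\le 1$. Each $u_i$ is a unit vector, hence $|\ip{x}{u_i}|\le \|x\|_2$. Therefore
\[
\|x\|_2^2=\sum_i c_i|\ip{x}{u_i}|^2\le \|x\|_2\sum_i c_i|\ip{x}{u_i}|\le \|x\|_2 .
\]
This gives $\|x\|_2\le 1$, i.e. $\|x\|_2\le\|x\|_{K_1}$ for all $x$.

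\emph{Inner inclusion $\frac1{\sqrt d}B_2^d\subseteq K_1$.} We apply Cauchy--Schwarz with weights $c_i$:
\[
\sum_i c_i|\ip{x}{u_i}|\le\Big(\sum_i c_i\Big)^{1/2}\Big(\sum_i c_i\ip{x}{u_i}^2\Big)^{1/2}=\sqrt d\,\|x\|_2 .
\]
So $\|x\|_{K_1}\le\sqrt d\|x\|_2$, and any $x$ with $\|x\|_2\le 1/\sqrt d$ lies in $K_1$. Equivalently, this is the case $p=2$, $q=2$ of Lemma~\ref{lem:weighted-lewis-l1-lp-comparison} combined with $K_2=B_2^d$.

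There is no real obstacle here. The only point requiring care is to extract both the identity $\|x\|_2^2=\sum_i c_i\ip{x}{u_i}^2$ and the trace identity $\sum_i c_i=d$ from \eqref{eq:isotropic}.
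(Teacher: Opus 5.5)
Your proposal is correct and follows essentially the same route as the paper: the outer inclusion is argued identically, and for the inner inclusion you write out the weighted Cauchy--Schwarz step (with $\sum_i c_i = d$) that the paper obtains by citing Lemma~\ref{lem:weighted-lewis-l1-lp-comparison} at $p=2$. As you note yourself, these are the same argument.
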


\begin{proof} The first inclusion follows from Lemma~\ref{lem:weighted-lewis-l1-lp-comparison} with $p=2$. Take $x \in K_1$. By \eqref{eq:isotropic},
\[
  \|x\|_2^2=\sum_{i=1}^m c_i |\ip{x}{u_i}|^2.
\]
Since $u_i\in \partial B_2^d$, we have $|\ip{x}{u_i}|\le\|x\|_2$, so
\[
  \|x\|_2^2
  \le \|x\|_2\sum_{i=1}^m c_i |\ip{x}{u_i}| \le \|x\|_2.
\]
Then $\|x\|_2\le 1$, so $K_1\subseteq B_2^d$.
\end{proof}

For a different convex body, we also use the symmetric John position~\cite{John1948}.

\begin{theorem}
\label{thm:john-loewner}
Let $K\subset \R^n$ be a symmetric convex body whose
maximal-volume inscribed ellipsoid is $B_2^n$.  There are unit vectors
$u_1,\ldots,u_N\in K \cap K^\circ\cap \partial B_2^n$ and weights
$c_1,\ldots,c_N>0$ such that
\[
  \sum_{i=1}^N c_i u_i u_i^\top=I_n.
\]
\end{theorem}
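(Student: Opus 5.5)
The statement is John's classical theorem in its symmetric form. I would argue by optimality of $B_2^n$ among inscribed ellipsoids, via the standard separation argument.

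\textbf{Step 1: contact points.} Let $U=\partial K\cap\partial B_2^n$ be the set of contact points. Since $B_2^n\subseteq K$, every $u\in U$ lies in $K$. The supporting hyperplane of $K$ at $u$ must also support $B_2^n$ at $u$, so it is $\{x:\ip{x}{u}=1\}$. Hence $\ip{x}{u}\le 1$ for all $x\in K$, and together with the symmetry of $K$ this gives $u\in K^\circ$. So every contact point lies in $K\cap K^\circ\cap\partial B_2^n$, and it suffices to show that $I_n/n$ lies in the convex hull $\mathcal C=\conv\{uu^\top:u\in U\}$. Carath\'eodory's theorem then produces finitely many $u_i$ and positive weights, which we rescale by $n$.

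\textbf{Step 2: separation.} Suppose $I_n/n\notin\mathcal C$. The set $\mathcal C$ is compact, since $U$ is compact, and it lies in the affine hyperplane of symmetric matrices with trace $1$. Separating inside the space of symmetric matrices with the trace inner product, and subtracting a suitable multiple of the identity, we obtain a symmetric $H$ with $\operatorname{tr}H=0$ and $\ip{Hu}{u}\ge\delta>0$ for all $u\in U$.

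\textbf{Step 3: perturbation.} Consider the ellipsoids $E_t=(I-tH)B_2^n$ for small $t>0$. Their volume is
\[
\det(I-tH)=1-t\operatorname{tr}H+O(t^2)=1+O(t^2).
\]
Shrinking them slightly to $E_t'=(1-t\delta/4)E_t$ gives volume
\[
(1-t\delta/4)^n(1+O(t^2)).
\]
This is a first-order loss, so the naive perturbation has smaller volume and proves nothing. I would therefore use instead
\[
E_t=\bigl(I-tH\bigr)^{-1}\text{-type}
\]
maps with $\det=1$ exactly. Concretely, take $E_t=e^{-tH}B_2^n$, which has $\det e^{-tH}=e^{-t\operatorname{tr}H}=1$. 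Its support function is $h_{E_t}(y)=\|e^{-tH}y\|_2$.

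\textbf{Step 4: showing $E_t\subseteq\operatorname{int}K$ (the main obstacle).} This requires a compactness argument on the sphere, split into two regions:
\begin{itemize}
\item[(a)] For unit $y$ near $U$, we have $\|e^{-tH}y\|_2\le 1-t\ip{Hy}{y}+O(t^2)\le 1-t\delta/2$. Since $h_K(y)\ge 1$, the body $E_t$ stays strictly inside $K$ in those directions.
\item[(b)] For unit $y$ at distance at least $\eta$ from $U$ and from $-U$, a compactness argument gives $h_K(y)\ge 1+\gamma$ for some $\gamma>0$. Otherwise some boundary point of $K$ would meet the sphere in direction $y$, making $y$ or $-y$ a contact point up to the symmetric dual formulation. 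Meanwhile $h_{E_t}(y)\le 1+O(t)$.
\end{itemize}
Choosing first $\eta$ and then $t$ small gives $h_{E_t}<h_K$ on the whole sphere. Strictly, what must be compared are $\|\cdot\|_K$ and $\|\cdot\|_{E_t}$, or equivalently the polar support functions; the contact-point characterization from Step 1 makes these equivalent.

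\textbf{Step 5: conclusion.} Since $E_t\subseteq\operatorname{int}K$, a slight dilation $(1+\epsilon)E_t$ still lies in $K$. It has volume strictly larger than $\vol(B_2^n)$, contradicting the maximality of $B_2^n$. Hence $I_n/n\in\mathcal C$, and the theorem follows.
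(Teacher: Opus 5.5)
The paper gives no proof of this statement; it cites John's 1948 theorem. Your proposal supplies the standard self-contained argument, and it is correct: you separate $I_n/n$ from $\conv\{uu^\top:u\in U\}$ by a trace-zero symmetric $H$, then deform $B_2^n$ by the volume-preserving map $e^{-tH}$, which pulls it strictly inside $K$. Compared with the citation, this buys an elementary proof that needs only the separation theorem and a first-order expansion. Symmetry of $K$ enters only in Step 1, to upgrade $\ip{x}{u}\le 1$ to $u\in K^\circ$. Centered linear perturbations suffice because the statement has no condition $\sum_i c_i u_i=0$.

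A few points should be tidied, none of them a gap.

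\textbf{Step 3.} Your reason for discarding $(I-tH)B_2^n$ is mistaken: you should dilate, not shrink. The estimates of Step 4 apply verbatim to $(I-tH)B_2^n$ and give $h_{(I-tH)B_2^n}\le(1-t\delta/2)\,h_K$ on the sphere for small $t$. Hence $(1-t\delta/2)^{-1}(I-tH)B_2^n\subseteq K$, with volume $(1-t\delta/2)^{-n}(1+O(t^2))\vol_n(B_2^n)>\vol_n(B_2^n)$. The choice $e^{-tH}$ is merely cleaner, and the ``$(I-tH)^{-1}$-type'' detour can be deleted.

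\textbf{Step 4(b).} The justification should be stated directly. For a unit vector $y$, $y\in B_2^n\subseteq K$ gives $h_K(y)\ge\ip{y}{y}=1$. If equality holds, then $y\in K$ lies on the supporting hyperplane $\{x:\ip{x}{y}=1\}$, so $y\in\partial K\cap\partial B_2^n=U$. Thus $h_K>1$ off $U$, and continuity on the compact set $\{y\in\partial B_2^n:\operatorname{dist}(y,U)\ge\eta\}$ gives $\gamma>0$. No condition involving $-U$ and no dual formulation is needed.

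\textbf{End of Step 4.} The closing remark is unnecessary. For compact convex sets, $E\subseteq K$ if and only if $h_E\le h_K$. Strict inequality on the compact sphere gives $h_{E_t}\le(1+\epsilon)^{-1}h_K$ for some $\epsilon>0$, which is exactly the dilation used in Step 5. Comparing support functions is already the right criterion, and contact points play no role there.

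\textbf{Empty $U$.} If $U=\emptyset$, the separation step is vacuous (take $H=0$), and region (b) alone yields $B_2^n\subseteq\operatorname{int}K$, giving the same contradiction.
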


For a full-dimensional convex body $K\subset \R^n$, let
$\pazocal E_K$ denote the maximal-volume ellipsoid contained in $K$, and define
\(\vr(K):=\left(\frac{\vol_n(K)}{\vol_n(\pazocal E_K)}\right)^{1/n}\).
This quantity is invariant under invertible linear transformations, and so it will later be useful to assume John position when bounding the volume ratio of a convex body.

\subsection{Entropy and Gelfand numbers}

For convex bodies $K,L\subset \R^d$, let $N(K,L)$ be the least number of translates of $L$ needed to cover $K$, and define the $k$th dyadic \textit{entropy number} by \[ e_k(K,L):=\inf\{t>0:N(K,tL)\le 2^{k-1}\}.\]  We abbreviate $e_k(K):=e_k(K,B_2^d)$ and use this quantity to upper bound volume. We in turn upper bound entropy numbers in terms of \textit{section radii} known in the operator theory literature as Gelfand numbers:

\begin{definition}\label{def:section-radius}
Let $K,L\subseteq\R^d$ be symmetric convex bodies and denote
$H:=\spanop K$.  For $k \in \mathbb{N}$, define
\[
  r_k(K,L):=\inf\left\{
  \sup_{x\in K\cap F}\|x\|_L:
  F\subseteq H\text{ a subspace},\ \codim_H F<k
  \right\}.
\]
\end{definition}
We abbreviate $r_k(K):=r_k(K,B_2^d)$. Clearly, $r_k (K,L)$ is decreasing in $k$, and if $k>\dim H$, then $r_k(K,L)=0$, since one may take $F=\{\mathbf{0}\}$. The infimum is attained: for each $k$ the corresponding
Grassmannian in $H$ is compact, and the objective is continuous on
that Grassmannian.

\textit{Carl's inequality} upper bounds entropy numbers in terms of section radii; see~\cite{Carl1981} and Pisier's textbook~\cite[Chapter~5]{Pisier1999}. We state the general version, although we only need it for $\alpha = 1$ and $L = B^d_2$.

\begin{theorem}\label{thm:carl-covering}
For every $\alpha>0$ there is a constant $C_\alpha$ such that the following
holds.  Let $K,L$ be symmetric convex bodies in $\R^d$, and let $k\ge1$ be an
integer.  Then
\[
  \max_{j \in [1,k]} j^\alpha e_j(K,L)
  \le C_\alpha\max_{j \in [1,k]} j^\alpha r_j(K,L).
\]
\end{theorem}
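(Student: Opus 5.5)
The plan is to prove the inequality by a self-improving covering argument, using induction on $k$ (or equivalently a bootstrap on the weighted maximum). The basic geometric step compares covering numbers at two scales using a single section. Fix $k$, and let $F\subseteq H$ be a subspace with $\codim_H F<k$ attaining $r:=r_k(K,L)$. Let $Q:H\to H/F$ be the quotient map, so $Q(K)$ is a symmetric convex body of dimension less than $k$. By the standard volumetric bound, for any $\eps\in(0,1]$ we can cover $Q(K)$ by at most $(1+2/\eps)^{k}\le(3/\eps)^k$ translates $Q(z_i)+\eps Q(K)$ with $z_i\in K$.

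The first key step is to turn this into a covering of $K$. If $x\in K$ satisfies $Q(x)\in Q(z_i)+\eps Q(K)$, then $x-z_i=\eps w+f$ with $w\in K$ and $f\in F$. Moreover $f=x-z_i-\eps w\in 3K\cap F$, so $\|f\|_L\le 3r$. Hence
\[
N\bigl(K,\ \eps K+3rL\bigr)\le (3/\eps)^k .
\]
Next, cover each translate of $\eps K$ by $N(K,tL)$ translates of $\eps t L$. This gives
\[
N\bigl(K,(3r_k+\eps t)L\bigr)\le (3/\eps)^k\,N(K,tL).
\]
Take $t$ slightly larger than $e_k(K,L)$, so that $N(K,tL)\le 2^{k-1}$. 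The right-hand side is then at most $2^{k-1+k\log(3/\eps)}\le 2^{mk-1}$ with $m:=1+\lceil\log(3/\eps)\rceil$. Therefore
\[
e_{mk}(K,L)\le 3\,r_k(K,L)+\eps\, e_k(K,L).
\]

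The second step closes the induction. Multiply the last display by $(mk)^\alpha$:
\[
(mk)^\alpha e_{mk}\le 3m^\alpha\, k^\alpha r_k+\eps m^\alpha\, k^\alpha e_k .
\]
Choose $\eps=\eps(\alpha)$ small enough that $\eps m^\alpha\le \tfrac12$; this is possible because $m$ grows only logarithmically in $1/\eps$. Let $E_k:=\max_{j\le k} j^\alpha e_j(K,L)$ and $R_k:=\max_{j\le k} j^\alpha r_j(K,L)$. The display gives $j^\alpha e_j\le 3m^\alpha R_k+\tfrac12 E_k$ for indices $j=mi$ with $i\le k/m$. Indices $j$ not of this form are handled by monotonicity of $e_j$ in $j$: we have $e_j\le e_{m\lfloor j/m\rfloor}$, which costs only a factor $m^\alpha$. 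Small indices $j<m$ are handled by the trivial bound $e_j\le e_1\le r_1\cdot O(1)$. That bound holds because $e_1(K,L)\le\sup_{x\in K}\|x\|_L=r_1(K,L)$, taking $F=H$. Combining these, $E_k\le C'R_k+\tfrac12 E_k$ up to $m^\alpha$ factors, hence $E_k\le C_\alpha R_k$ with $C_\alpha\approx 6m^{2\alpha}$. This uses that $E_k$ is finite, which holds since $K$ is bounded and $L$ has nonempty interior.

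The main obstacle I expect is the index bookkeeping. The recursion relates $e_{mk}$ to $e_k$ rather than to all smaller indices, so one must check that the weighted maxima are compared consistently. In particular, the $\tfrac12 E_k$ term must be absorbed on the left, not reintroduced at a larger scale. If the direct absorption is awkward, I would instead run the induction over geometric scales $k=m^s$, proving $m^{s\alpha}e_{m^s}\le C_\alpha R_{m^s}$ by induction on $s$, and then interpolate to general $j$ by monotonicity. A minor technical point is restricting to $H=\spanop K$, where $Q(K)$ is full-dimensional in $H/F$ so that the volumetric covering bound applies.
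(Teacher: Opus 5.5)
Your argument is correct, but it takes a different route from the paper. The paper gives no proof of this statement; it only cites Carl and Chapter~5 of Pisier's book. The usual operator-theoretic proof there goes through approximation numbers in three steps:
\begin{enumerate}
\item a volumetric entropy bound for finite-rank maps;
\item a dyadic telescoping decomposition into finite-rank pieces, combined with subadditivity of entropy numbers;
\item a transfer to Gelfand numbers by embedding the target isometrically into an $\ell_\infty$-space, where Gelfand and approximation numbers coincide, at the cost of a factor $2$ in the entropy numbers.
\end{enumerate}
You instead work directly with an optimal section. The volumetric covering of the at most $(k-1)$-dimensional quotient $Q(K)$, lifted back using $K\cap F\subseteq r_kL$, gives the one-step recursion $e_{mk}(K,L)\le 3r_k(K,L)+\eps\,e_k(K,L)$. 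The bootstrap then only needs that $m=O(\log(1/\eps))$. Your route is more elementary, self-contained, and stays in the convex-geometric language the paper uses. The classical route in exchange also yields the companion statements for approximation and Kolmogorov numbers of general operators.

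One bookkeeping correction applies to your direct-absorption version. The rounding $e_j\le e_{m\lfloor j/m\rfloor}$ costs a factor $(j/(m\lfloor j/m\rfloor))^\alpha\le 2^\alpha$ on \emph{both} terms. So the coefficient you must absorb is $\eps(2m)^\alpha$, not $\eps m^\alpha$, and the condition $\eps m^\alpha\le\frac12$ is not quite enough. Choose $\eps$ with $\eps(2m)^\alpha\le\frac12$ instead; this is possible since $m$ is logarithmic in $1/\eps$, and it gives $E_k\le 6(2m)^\alpha R_k$.

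Your fallback induction over the scales $m^s$ avoids this issue, since the rounding happens only once, after the induction. It gives exactly your constant $C_\alpha=6m^{2\alpha}$ under the condition $\eps m^\alpha\le\frac12$.
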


We use \textit{Gordon's escape theorem}~\cite{Gordon1988} to construct sections of small radii. For a bounded set $S\subseteq\R^n$, its \textit{(Gaussian) width} is $\wg(S):=\E_{g\sim N(0,I_n)}\sup_{x\in S}\ip{g}{x}$.

\begin{theorem}[{\cite[Theorem~9.3.4]{Vershynin2018}}]\label{thm:escape}
Let $S\subseteq \partial B_2^n$ be any set, and let $G$ be an $m\times n$ random matrix with independent standard Gaussian entries.

If $m\gtrsim \wg(S)^2$, then the random subspace $F:=\ker G$ satisfies $S\cap F=\emptyset$ with probability at least $1-2\exp(-\Omega(m))$.
In particular, there exists a universal constant $m_{\mathrm E} \in \mathbb{N}$ so that for
$m\ge m_{\mathrm E}\max\{1,\wg(S)^2\}$ one can find a subspace
$F \subseteq \mathbb{R}^n$ with $\codim F\le m$ and $S\cap F=\emptyset$.
\end{theorem}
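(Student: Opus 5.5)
The plan is to prove the probabilistic statement via Gordon's comparison inequality plus Gaussian concentration, and then derive the existence statement by choosing $m$ large enough that the failure probability is below one. The key quantity is
\[
  \Phi(G):=\inf_{x\in S}\|Gx\|_2 .
\]
Since $S$ lies on the sphere, $S\cap\ker G=\emptyset$ holds as soon as $\Phi(G)>0$. (If the infimum is not attained, one can pass to the closure $\overline S\subseteq\partial B_2^n$, which has the same width; this only makes the conclusion stronger.) So it suffices to show that $\Phi(G)$ is bounded away from zero with high probability.

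\emph{Step 1: lower bound on the mean.} Write $\|Gx\|_2=\sup_{y\in\partial B_2^m}\ip{Gx}{y}$, so that $\Phi(G)=\inf_{x\in S}\sup_{y}\ip{Gx}{y}$. Gordon's min--max comparison inequality compares the Gaussian process $X_{x,y}=\ip{Gx}{y}$ with $Y_{x,y}=\ip{g}{x}+\ip{h}{y}$, where $g\sim N(0,I_n)$ and $h\sim N(0,I_m)$ are independent. The increment conditions hold because $\|x\|_2=\|y\|_2=1$. This yields
\[
  \E\,\Phi(G)\ \ge\ \E\|h\|_2-\wg(S).
\]
Using $\E\|h\|_2\ge\sqrt m/2$ (indeed $\E\|h\|_2\ge\sqrt{m-1}$, or $\ge m/\sqrt{m+1}$), we get
\[
  \E\,\Phi(G)\ge \tfrac12\sqrt m-\wg(S).
\]
This comparison step is the only nontrivial ingredient; I would quote Gordon's inequality~\cite{Gordon1988} rather than reprove it.

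\emph{Step 2: concentration.} The map $G\mapsto\Phi(G)$ is $1$-Lipschitz with respect to the Frobenius norm. Indeed, for $x\in S$ we have $\bigl|\|Gx\|_2-\|G'x\|_2\bigr|\le\|G-G'\|_{\mathrm{op}}\le\|G-G'\|_F$, and an infimum of $1$-Lipschitz functions is $1$-Lipschitz. Gaussian concentration for Lipschitz functions then gives
\[
  \Pr\bigl[\Phi(G)\le \E\Phi(G)-t\bigr]\le 2e^{-t^2/2}.
\]
If $m\ge C\wg(S)^2$ with $C$ large (say $C=64$), then $\E\Phi(G)\ge\tfrac12\sqrt m-\tfrac18\sqrt m\ge\tfrac14\sqrt m$. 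Taking $t=\tfrac14\sqrt m$ gives $\Phi(G)>0$, hence $S\cap\ker G=\emptyset$, with probability at least $1-2e^{-m/32}=1-2\exp(-\Omega(m))$.

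\emph{Step 3: the deterministic corollary.} Choose $m_{\mathrm E}$ such that $m_{\mathrm E}\ge C$ and $2e^{-m_{\mathrm E}/32}<1$. Then for $m\ge m_{\mathrm E}\max\{1,\wg(S)^2\}$ the event above has positive probability, so some realization $G$ works. Take $F=\ker G$. Its codimension equals $\operatorname{rank} G\le\min\{m,n\}\le m$, which gives the required bound; in particular the case $m\ge n$ is harmless. The main obstacle is purely Step 1, and beyond that only the bookkeeping of constants remains.
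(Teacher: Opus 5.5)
Your proof is correct. The paper gives no proof of this statement; it imports it from Vershynin's book, where the escape theorem is deduced from the high-probability matrix deviation inequality, $\sup_{x\in S}\bigl|\|Gx\|_2-\sqrt m\bigr|\lesssim \wg(S)+u$ with probability at least $1-2e^{-u^2}$. A point $x\in S\cap\ker G$ would then force $\sqrt m\lesssim\wg(S)+u$, which fails for $u\asymp\sqrt m$ once $m\gtrsim\wg(S)^2$.

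You instead follow Gordon's original route: the one-sided min--max comparison $\E\inf_{x\in S}\|Gx\|_2\ge\E\|h\|_2-\wg(S)$, followed by Gaussian concentration of the $1$-Lipschitz functional $\Phi$. The comparison is applied in the right direction: for unit vectors, $\E(Y_{x,y}-Y_{x',y'})^2-\E(X_{x,y}-X_{x',y'})^2=2(1-\ip{x}{x'})(1-\ip{y}{y'})\ge0$, with equality when $x=x'$. If you quote the equal-variance form of Gordon's inequality, add an independent $N(0,1)$ variable to $X_{x,y}$; this leaves the increments and $\E\Phi$ unchanged.

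The two routes trade off as follows. The matrix-deviation route covers any matrix with independent isotropic sub-gaussian rows and gives a two-sided uniform estimate, but it rests on Talagrand's comparison inequality and leaves the constants unspecified. Your route is specific to Gaussian $G$, which is all the statement and the paper need. It uses only the one-sided information that matters and gives explicit constants, such as $m_{\mathrm E}=64$ with failure probability at most $2e^{-m/32}$.
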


We now turn Gaussian width into a section radius bound. This is the
geometric form of Carl--Pajor's Gelfand-number estimate~\cite[Lemma~2.1]{CarlPajor1988},
also recorded in Pisier's textbook~\cite[Chapter~5, Theorem~5.8]{Pisier1999}. A self-contained proof using Theorem~\ref{thm:escape} is included in Appendix~\ref{app:covariance-width-section-proofs}.

\begin{proposition}\label{prop:covariance-width-section}
Let $K\subseteq\R^n$ be a symmetric convex body and $k \in \mathbb{N}$. Then
\[
  r_k(K)
  \lesssim \frac{\wg(K)}{\sqrt{k}}.
\]
\end{proposition}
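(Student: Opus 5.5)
We prove the bound via Gordon's escape theorem (Theorem~\ref{thm:escape}), applied to the part of $K$ that lies outside a Euclidean ball of radius $r$, rescaled onto the sphere. Fix $r>0$ and set
\[
  S_r:=\Big\{\tfrac{x}{\|x\|_2}: x\in K,\ \|x\|_2\ge r\Big\}\subseteq \partial B_2^n .
\]
Suppose a subspace $F$ with $\codim F<k$ avoids $S_r$. Then every $x\in K\cap F$ satisfies $\|x\|_2<r$, since otherwise $x/\|x\|_2\in S_r\cap F$ ($F$ is a subspace). Hence $r_k(K)\le r$. It therefore suffices to choose $r\simeq \wg(K)/\sqrt{k}$ so that $m_{\mathrm E}\max\{1,\wg(S_r)^2\}\le k-1$. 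The case $k=1$ is trivial, because $r_1(K)=\sup_{x\in K}\|x\|_2\lesssim \wg(K)$: for any $x\in K$, symmetry gives $\wg(K)\ge \E|\ip{g}{x}|=\sqrt{2/\pi}\,\|x\|_2$. The same bound handles all $k$ below a fixed constant, so assume $k$ is large.

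The key step is to bound $\wg(S_r)$ by $\wg(K)/r$. This does not follow from the naive estimate $\wg(S_r)\le \wg(r^{-1}K)$, because normalizing points of large norm does not scale them linearly. Instead, use convexity and symmetry. If $x\in K$ and $\|x\|_2\ge r$, then $y:=r\,x/\|x\|_2$ lies on the segment $[0,x]\subseteq K$, so $y\in K\cap r\,\partial B_2^n$. Consequently
\[
  S_r\subseteq \tfrac1r\,(K\cap rB_2^n)\subseteq \tfrac1r K,
\]
and by monotonicity and homogeneity of the width,
\[
  \wg(S_r)\le \wg(K)/r.
\]

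Now take $r:=C\,\wg(K)/\sqrt{k}$ with $C$ large enough that $m_{\mathrm E}\,\wg(S_r)^2\le m_{\mathrm E}k/C^2\le k-1$ for $k\ge 2$. We also need $m_{\mathrm E}\le k-1$, which holds for $k>m_{\mathrm E}$; the finitely many smaller $k$ were handled above. Theorem~\ref{thm:escape} then gives $F$ with $\codim F\le k-1$ and $S_r\cap F=\emptyset$, so $r_k(K)\le r\lesssim \wg(K)/\sqrt{k}$.

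The only point needing care is the inclusion $S_r\subseteq r^{-1}K$, which relies on convexity and on $0\in K$. If $S_r$ is empty, the bound $r_k(K)\le r$ holds trivially.
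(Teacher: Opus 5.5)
Your proof is correct and is essentially the paper's own argument. The paper likewise disposes of small $k$ via $r_1(K)\lesssim \wg(K)$, takes $r=\sqrt{2m_{\mathrm E}}\,\wg(K)/\sqrt{k}$, and applies Theorem~\ref{thm:escape} to $S=(\tfrac1r K)\cap\partial B_2^H$; by convexity this set coincides with your $S_r$. The only cosmetic difference is where convexity is used: you use it to show $S_r\subseteq \tfrac1r K$, whereas the paper uses it at the end to push a long vector of $K\cap F$ back onto $r\,\partial B_2^H$.
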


\subsection{Covariance domination and Gaussian width}

In this subsection, we translate tools from operator theory to convex geometric language. For completeness, we include the short proofs in Appendix~\ref{app:covariance-width-section-proofs}.

For a convex symmetric body $K\subseteq\R^d$, define $\pazocal M(K):=\conv\{yy^\top:y\in K\}$ and
\[
  \pazocal{B}_n(K):=
  \left\{M \in \mathbb{R}^{n \times d}:
  M^\top M\preceq A
  \text{ for some }A\in\pazocal M(K^\circ)
  \right\}.
\]
Here $B\preceq A$ means that $A-B$ is positive semidefinite. $\pazocal{B}_n(K)$ is the unit ball of the
absolutely $2$-summing norm introduced by
Pietsch~\cite{Pietsch1979}; see~\cite[Chapter~1]{Pisier1999}. 

\begin{lemma}\label{lem:Bnorm-convex}
$\pazocal{B}_n(K)$ is convex for any symmetric convex body $K\subseteq\R^d$ and $n \in \mathbb{N}$.
\end{lemma}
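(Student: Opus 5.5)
The plan is to check convexity directly from the definition. Take $M_1,M_2\in\pazocal B_n(K)$ and $\lambda\in[0,1]$, with witnesses $A_1,A_2\in\pazocal M(K^\circ)$ such that $M_j^\top M_j\preceq A_j$. Put $M:=\lambda M_1+(1-\lambda)M_2$. The candidate witness for $M$ is $A:=\lambda A_1+(1-\lambda)A_2$. It lies in $\pazocal M(K^\circ)$ because $\pazocal M(K^\circ)$ is a convex hull, hence convex. It then remains to show $M^\top M\preceq A$.

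The key step is the convexity of the map $M\mapsto M^\top M$ in the Loewner order. Expanding gives
\[
\lambda M_1^\top M_1+(1-\lambda)M_2^\top M_2-M^\top M=\lambda(1-\lambda)(M_1-M_2)^\top(M_1-M_2)\succeq 0 .
\]
This is the usual identity $\lambda a^2+(1-\lambda)b^2-(\lambda a+(1-\lambda)b)^2=\lambda(1-\lambda)(a-b)^2$, now in matrix form: the cross terms $M_1^\top M_2+M_2^\top M_1$ appear with the same coefficient on both sides, so the comparison goes through. Combining this with $M_j^\top M_j\preceq A_j$ and using that the PSD cone is closed under nonnegative combinations gives
\[
M^\top M\preceq \lambda M_1^\top M_1+(1-\lambda)M_2^\top M_2\preceq \lambda A_1+(1-\lambda)A_2=A .
\]
Hence $M\in\pazocal B_n(K)$.

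I do not expect a real obstacle. The only points to watch are that the cross terms match in the expansion above, and that $\pazocal M(K^\circ)$ is convex by definition, so no closure argument is needed for convexity. If compactness of $\pazocal B_n(K)$ were also wanted, it would follow because $\pazocal M(K^\circ)$ is the convex hull of a compact set in finite dimension, hence compact, but the statement does not require this.
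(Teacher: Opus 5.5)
Your proof is correct and is essentially the paper's argument: you use the same witness $\lambda A_1+(1-\lambda)A_2\in\pazocal M(K^\circ)$ and show $M^\top M\preceq A$. The only difference is cosmetic: you verify the Loewner inequality through the exact identity $\lambda M_1^\top M_1+(1-\lambda)M_2^\top M_2-M^\top M=\lambda(1-\lambda)(M_1-M_2)^\top(M_1-M_2)$, whereas the paper checks it pointwise via $\|Mx\|_2\le\sqrt{x^\top A x}$, the triangle inequality and Cauchy--Schwarz.
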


\begin{definition}\label{def:gamma-covariance}
Let $K\subseteq\R^d$ be a symmetric convex body.  Define
\[
  \Gamma(K):=
  \sup_{A\in\pazocal M(K^\circ)}
  \E_{g\sim N(0,A)}\|g\|_{K^\circ}.
\]
\end{definition}
The supremum is attained since $\pazocal M(K^\circ)$ is compact. In the language of operator theory, this is referred to as the Gaussian type-$2$ constant~\cite[Chapter~9]{LedouxTalagrand1991}.

\begin{lemma}\label{lem:Bnorm-dominates-radius-one}
Let $K\subseteq\R^d$ be a symmetric convex body, and let
$M:\R^d\to\R^n$ be linear. Then
\[
  r_1(MK)\le \norm{M}_{\pazocal{B}_n(K)}.
\]
\end{lemma}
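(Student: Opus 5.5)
Write $\norm{M}_{\pazocal B_n(K)}=\inf\{t>0: M\in t\pazocal B_n(K)\}$. Since $r_1(MK)=\sup_{x\in K}\|Mx\|_2$, it suffices to prove the following: if $M\in\pazocal B_n(K)$, then $\|Mx\|_2\le 1$ for every $x\in K$. The general statement then follows by homogeneity. Indeed, both sides scale linearly in $M$, so applying the claim to $M/t$ for each $t>\norm{M}_{\pazocal B_n(K)}$ and letting $t$ decrease gives the bound. Here $\pazocal B_n(K)$ is convex by Lemma~\ref{lem:Bnorm-convex} and symmetric, so the gauge is well defined.

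So fix $M\in\pazocal B_n(K)$. By definition there is $A\in\pazocal M(K^\circ)$ with $M^\top M\preceq A$. For $x\in K$,
\[
\|Mx\|_2^2=x^\top M^\top M x\le x^\top A x .
\]
Now write $A$ as a convex combination $A=\sum_j \lambda_j y_jy_j^\top$ with $y_j\in K^\circ$, $\lambda_j\ge0$ and $\sum_j\lambda_j=1$. This is possible by Carath\'eodory's theorem, since $\pazocal M(K^\circ)$ is the convex hull of the rank-one matrices $yy^\top$. Then
\[
x^\top A x=\sum_j\lambda_j\ip{x}{y_j}^2\le \sum_j\lambda_j=1,
\]
because $|\ip{x}{y}|\le 1$ for all $x\in K$ and $y\in K^\circ$. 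Hence $\|Mx\|_2\le 1$ on $K$, so $r_1(MK)\le 1$.

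There is no real obstacle; the argument is a direct unwinding of the definitions. The only point to check is that the definition of $r_1$ matches this supremum. With $k=1$ the condition $\codim_H F<1$ forces $F=H=\spanop(MK)$, so $r_1(MK)=\sup_{z\in MK}\|z\|_2$. This holds even when $MK$ is lower-dimensional, since Definition~\ref{def:section-radius} works inside $H=\spanop(MK)$.
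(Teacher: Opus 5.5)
Your proof is correct and is essentially the paper's argument: you bound $\|Mx\|_2^2\le x^\top A x$ and expand $A$ as a convex combination of $yy^\top$ with $y\in K^\circ$. The only difference is cosmetic. You normalize to $M\in\pazocal B_n(K)$ and recover the general case by letting $t\downarrow\norm{M}_{\pazocal B_n(K)}$, whereas the paper takes $A$ with $M^\top M\preceq t^2A$ directly at $t=\norm{M}_{\pazocal B_n(K)}$, implicitly using that the gauge is attained.
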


The next lemma bounds the width of a linear image~\cite[Lemma~1]{DavisMilmanTomczak1981}.

\begin{lemma}\label{lem:width-covariance-domination}
Let $K\subseteq\R^d$ be a symmetric convex body, and let $M:\R^d\to\R^n$ be
linear.  Then
\[
  \wg(MK)\le \norm{M}_{\pazocal{B}_n(K)}\Gamma(K).
\]
\end{lemma}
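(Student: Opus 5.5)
We prove Lemma~\ref{lem:width-covariance-domination} directly from the definitions, by writing the width of $MK$ as the expected dual norm of a Gaussian and then comparing Gaussians through their covariances. Since $MK$ is the image of $K$,
\[
  \wg(MK)=\E_{g\sim N(0,I_n)}\sup_{x\in K}\ip{g}{Mx}
  =\E_{g}\sup_{x\in K}\ip{M^\top g}{x}
  =\E_{g}\|M^\top g\|_{K^\circ}.
\]
The random vector $h:=M^\top g$ is Gaussian in $\R^d$ with covariance $M^\top M$. So it suffices to show the following: if $M^\top M\preceq A$ with $A\in\pazocal M(K^\circ)$, then $\E\|h\|_{K^\circ}\le \E_{g'\sim N(0,A)}\|g'\|_{K^\circ}\le\Gamma(K)$.

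To reduce to exactly this situation we rescale. Let $t:=\norm{M}_{\pazocal B_n(K)}$. If $t=0$ then $M=0$ (since $\pazocal B_n(K)$ is bounded) and there is nothing to prove. Otherwise, by the definition of the gauge and compactness (or by taking $t'>t$ and letting $t'\downarrow t$), we have $M/t'\in\pazocal B_n(K)$. That is, $(M/t')^\top(M/t')\preceq A$ for some $A\in\pazocal M(K^\circ)$. By homogeneity of both sides it is enough to prove $\wg(MK)\le\Gamma(K)$ whenever $M^\top M\preceq A$.

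The comparison step uses that domination of covariances gives domination of expected norms, via a convexity argument. Write $A=M^\top M+D$ with $D\succeq0$. Let $g''\sim N(0,D)$ be independent of $h$. Then $h+g''\sim N(0,A)$. Conditioning on $h$ and applying Jensen's inequality to the convex function $\|\cdot\|_{K^\circ}$, using $\E g''=0$, gives
\[
  \E\|h\|_{K^\circ}=\E_h\|h+\E g''\|_{K^\circ}\le \E_h\E_{g''}\|h+g''\|_{K^\circ}=\E_{g'\sim N(0,A)}\|g'\|_{K^\circ}\le\Gamma(K).
\]
Combining this with the rescaling gives $\wg(MK)\le t'\Gamma(K)$ for all $t'>t$, hence $\wg(MK)\le\norm{M}_{\pazocal B_n(K)}\Gamma(K)$.

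The only mildly delicate point is the gauge/attainment issue in the rescaling. It is handled by the limiting argument $t'\downarrow t$, so no convexity of $\pazocal B_n(K)$ beyond Lemma~\ref{lem:Bnorm-convex} is needed, and we need only that the gauge is well defined.
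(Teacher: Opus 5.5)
Your proof is correct and follows essentially the same route as the paper: write $\wg(MK)=\E\|M^\top g\|_{K^\circ}$, add an independent Gaussian with covariance equal to the gap between $M^\top M$ and the dominating matrix, and apply Jensen conditionally. The only difference is that you handle attainment of the gauge by letting $t'\downarrow t$. The paper instead picks $A\in\pazocal M(K^\circ)$ with $M^\top M\preceq t^2A$ directly, which is also legitimate because $\pazocal B_n(K)$ is closed, since $\pazocal M(K^\circ)$ is compact.
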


Finally, we use a bound on the Gaussian width from a cover. 

\begin{lemma}\label{lem:cover-to-width}
Let $K\subseteq r B_2^n$ be bounded, and let $L\subseteq\R^n$ be symmetric and convex.  If
\[
  N(K,tL) \le 2^{k-1},
\]
then
\[
  \wg(K)\lesssim r\sqrt k +t\,\wg(L).
\]
\end{lemma}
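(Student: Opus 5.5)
The plan is a one-step chaining argument: replace the cover of $K$ by a cover whose centers lie in $K$, then split $\sup_{x\in K}\ip{g}{x}$ into a maximum over the finitely many centers plus a sup over $tL$.

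\textbf{Step 1 (centers in $K$).} Write $N:=N(K,tL)\le 2^{k-1}$ and fix a cover $K\subseteq\bigcup_{j=1}^N (x_j+tL)$. Discard every translate that misses $K$. For each remaining $j$ choose a point $y_j\in K\cap(x_j+tL)$. Let $z\in x_j+tL$. Then $z-y_j=(z-x_j)-(y_j-x_j)\in tL-tL$. Since $L$ is symmetric and convex, $tL-tL=2tL$, so $x_j+tL\subseteq y_j+2tL$. Hence
\[
K\subseteq\bigcup_{j=1}^{N}(y_j+2tL), \qquad y_j\in K\subseteq rB_2^n .
\]
We may assume $\wg(L)<\infty$, since otherwise there is nothing to prove. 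Because $L$ is symmetric, $\sup_{y\in L}\ip{g}{y}\ge 0$ for every $g$.

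\textbf{Step 2 (pointwise decomposition).} Fix $g\in\R^n$. Every $x\in K$ can be written as $x=y_j+2t\ell$ for some index $j$ and some $\ell\in L$. Therefore
\[
\sup_{x\in K}\ip{g}{x}\;\le\;\max_{j\le N}\ip{g}{y_j}\;+\;2t\sup_{\ell\in L}\ip{g}{\ell}.
\]
Now take expectations over $g\sim N(0,I_n)$. The second term contributes exactly $2t\,\wg(L)$.

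\textbf{Step 3 (finite maximum).} Each $\ip{g}{y_j}$ is a centered Gaussian with variance $\|y_j\|_2^2\le r^2$. The standard maximal inequality for at most $N$ such Gaussians gives
\[
\E\max_{j\le N}\ip{g}{y_j}\le r\sqrt{2\ln N}\lesssim r\sqrt{k-1}\le r\sqrt k .
\]
When $N=1$ the expectation is simply $0$, so the bound still holds. Adding the two terms yields $\wg(K)\lesssim r\sqrt k+t\,\wg(L)$.

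The only point requiring care is Step 1. The centers of an arbitrary cover need not lie in $rB_2^n$, so the Gaussian maximal bound cannot be applied to them directly. Moving the centers into $K$ fixes this, at the cost of doubling $t$, which is absorbed into the implicit constant.
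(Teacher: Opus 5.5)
Your proof is correct and follows essentially the same route as the paper: you re-center the cover at points of $K$ (replacing $t$ by $2t$), split $\sup_{x\in K}\ip{g}{x}$ into a finite Gaussian maximum over centers of norm at most $r$ plus $2t\,\wg(L)$, and bound the finite maximum by $r\sqrt{2\ln N}$. The paper derives this last bound via the moment-generating function, where you cite it as the standard Gaussian maximal inequality.
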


\subsection{Partial coloring from hereditary volume bounds}

For $S\subseteq[n]$, write $\R^S:=\{x\in\R^n:x_i=0 \ \forall i\notin S\}$.  For a convex body $K\subseteq\R^n$, define $K_S:=K\cap\R^S$.
We regard $K_S$ as a convex body in the coordinate space $\R^S$ and use
$\vol_{|S|}$ for volume in that space.  We also use the convention
$\vol_0(K_\emptyset)=1$.

Let $\gamma_n$ denote the standard Gaussian measure on $\R^n$.  We use two results of Reis and Rothvoss.  The first, \cite[Theorem~7]{ReisRothvoss2022}, says that hereditary lower bounds on coordinate-section volumes imply a Gaussian measure lower bound.

\begin{theorem}\label{thm:rr-coordinate-volume}
For every symmetric convex body $K \subset \mathbb{R}^n$,
\[
  \gamma_n(K)^{1/n}
  \gtrsim
  \left(\min_{S\subseteq[n]}\vol_{|S|}(K_S)\right)^{1/n}.
\]
\end{theorem}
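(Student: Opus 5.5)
Theorem~\ref{thm:rr-coordinate-volume} is quoted from Reis and Rothvoss, so the plan is to reconstruct its standard proof: compare Gaussian measure to volume on coordinate sections through a product structure, and induct on the dimension.

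\emph{Step 1 (symmetrization).} Write $\lambda:=\min_{S\subseteq[n]}\vol_{|S|}(K_S)^{1/n}$, with the convention $\vol_0(K_\emptyset)=1$. Since $K\supseteq$ its coordinate sections, we want a lower bound $\gamma_n(K)\ge (c\lambda)^n$. I would first reduce to an unconditional body. Steiner symmetrization in a coordinate hyperplane $e_i^\perp$ does three things:
\begin{itemize}
\item it preserves $\gamma_n$ up to not decreasing it, since along each line the Gaussian is a symmetric log-concave density, so the centered interval of the same length has at least as much mass;
\item it preserves volumes of coordinate sections containing $e_i$;
\item it does not decrease volumes of sections not containing $e_i$, since those become sections through the symmetrized hyperplane and contain the midpoint sets.
\end{itemize}
The last point needs a Brunn--Minkowski-type check. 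After symmetrizing in all coordinates we obtain an unconditional body $K'$ with $\gamma_n(K')\le\gamma_n(K)$ and all $\vol(K'_S)\ge\vol(K_S)$. I expect this to be the main obstacle. If it proves delicate, I would instead use the direct ``Gaussian measure $\ge$ volume of the part inside a box'' argument below, applied on each fiber.

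\emph{Step 2 (unconditional case).} For unconditional $K$, split the coordinates by the size of the body in each direction. Consider $Q:=K\cap[-1,1]^n$. On $[-1,1]^n$ the Gaussian density is at least $(2\pi e)^{-n/2}$, so
\[
\gamma_n(K)\ge(2\pi e)^{-n/2}\vol_n(Q).
\]
It therefore suffices to show $\vol_n(K\cap[-1,1]^n)\ge c^n\min_S\vol(K_S)$. By unconditionality,
\[
K\cap[-1,1]^n\supseteq \tfrac12\bigl(K_S\times [-1,1]^{S^c}\bigr)\cap\ldots
\]
This suggests a greedy choice of $S$. The cleaner route is induction on $n$: if $K\supseteq[-1,1]^n$ we get volume $2^n\ge1$ directly. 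Otherwise some direction has small extent, and we use Fubini over that coordinate. Unconditionality makes the fibers decrease from the central section, so
\[
\vol_n(Q)\gtrsim \min\bigl(1,h_i\bigr)\,\vol_{n-1}(Q\cap e_i^\perp),
\]
where $h_i$ is the half-length in direction $e_i$. We then iterate.

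\emph{Step 3 (closing the bound).} The factors $\min(1,h_i)$ combine with the section volumes. Iterating yields a set $S$ of ``long'' directions, and
\[
\vol(Q)\ge c^n \prod_{i\notin S}h_i\cdot\vol(K_S\cap[-1,1]^S).
\]
Compare this with $\vol(K_{S^c})$. By unconditionality $K_{S^c}$ is contained in the box $\prod_{i\notin S}[-h_i,h_i]$, hence
\[
\vol(K_{S^c})\le 2^{|S^c|}\prod_{i\notin S} h_i,
\]
which gives the right direction. I would organize the induction so that the final product is bounded below by $c^n\vol(K_T)$ for some $T$, and hence by $c^n\lambda^n$. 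Taking $n$-th roots gives the claim.
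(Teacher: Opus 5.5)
The paper does not prove this statement; it is quoted from Reis--Rothvoss. So I am judging your outline on its own, and it has genuine gaps.

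\textbf{Step~1 runs in the wrong direction.} As your first bullet correctly says, coordinate Steiner symmetrization can only \emph{increase} Gaussian measure. Hence $\gamma_n(K')\ge\gamma_n(K)$, and a lower bound for $\gamma_n(K')$ says nothing about $K$. Nor is the loss a constant factor per symmetrization. Take the parallelogram $K=\{x\in\R^2:|x_1-Lx_2|\le 1,\ |x_2|\le 1\}$. It satisfies $\gamma_2(K)\le\gamma_2(\{|x_1-Lx_2|\le 1\})<1/L$, yet its Steiner symmetral in direction $e_1$ is $[-1,1]^2$. The theorem holds for this $K$ only because $K_{\{2\}}$ has length $2/L$, and symmetrization inflates exactly that section to length $2$. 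Your third bullet is precisely the mechanism that throws away the information you need. So there is no reduction to unconditional bodies this way, and the non-unconditional case is where the content lies. The fallback of applying the box argument on each fiber is not specified.

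\textbf{Steps~2--3 fail even for unconditional bodies.} The slicing bound $\vol_n(Q)\gtrsim\min(1,h_i)\vol_{n-1}(Q\cap e_i^\perp)$ is false. For $Q=B_1^n$ the ratio is $\vol_n(B_1^n)/\vol_{n-1}(B_1^{n-1})=2/n$. Brunn's principle only yields the factor $2h_i/n$, which is sharp for double cones and compounds to $1/n!$, while for the cube nothing may be lost. The display in Step~3 also fails. For $K=\tfrac12 B_1^n$ every direction is short, so $S=\emptyset$, and the display asserts $1/n!=\vol_n(K)\ge c^n2^{-n}$. Products of extents are the wrong currency: the section volumes themselves must enter. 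Your ``organize the induction so that\dots'' is exactly the missing argument.

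\textbf{A route that works, with no symmetrization or slicing.}
\begin{enumerate}
\item Keep your bound $\gamma_n(K)\ge(2\pi e)^{-n/2}\vol_n(K\cap Q)$ for $Q=[-1,1]^n$.
\item Integrate $x\mapsto\vol_n(K\cap(Q+x))$ over $K+Q$. By Brunn--Minkowski and symmetry this function is maximized at $x=0$, which gives $\vol_n(K\cap Q)\,\vol_n(K+Q)\ge\vol_n(K)\vol_n(Q)$.
\item Add the segments $[-e_i,e_i]$ one at a time to get $\vol_n(K+Q)=\sum_{S\subseteq[n]}2^{n-|S|}\vol_{|S|}(P_{\R^S}K)$, where $P_{\R^S}$ is the coordinate projection.
\item Bound each term by Rogers--Shephard: $\vol_{|S|}(P_{\R^S}K)\,\vol_{n-|S|}(K_{[n]\setminus S})\le\binom{n}{|S|}\vol_n(K)$.
\end{enumerate}
Combining these gives $\vol_n(K\cap Q)\ge 3^{-n}\min_{T}\vol_{|T|}(K_T)$ for every symmetric convex body $K$. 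Together with the first item, this is the theorem.
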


The second produces a shifted partial coloring~\cite[Theorem~6]{ReisRothvoss2022}.

\begin{theorem}\label{thm:rr-shifted-pc}
For every $\alpha,\beta,\eta>0$ there is a constant
$C=C(\alpha,\beta,\eta)>0$ such that the following holds.  Let
$K\subseteq\R^n$ be a symmetric convex body with $\gamma_n(K)\ge e^{-\alpha n}$. Let $y\in[-1,1]^n$, and let $H\subseteq\R^n$ be a subspace with
$\dim(H)\ge\beta n$.  Then there is $x\in CK\cap H$ such that
$x+y\in[-1,1]^n$ and $\abs{\{i\in[n]:(x+y)_i\in\{\pm1\}\}}
  \ge (\beta-\eta)n$. Moreover, it may be constructed in polynomial time.
\end{theorem}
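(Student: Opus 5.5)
The plan is to follow Rothvoss' Gaussian-projection argument, in its shifted form. Put $Q:=\{x\in\R^n: x+y\in[-1,1]^n\}$ and consider the convex set $P:=CK\cap H\cap Q$. Since $y\in[-1,1]^n$, we have $\mathbf 0\in P$, so $P$ is nonempty. Sample $g\sim N(\mathbf 0,I_n)$ and let $x^*$ be the Euclidean projection of $g$ onto $P$. This is a convex quadratic program, so $x^*$ can be computed in polynomial time, up to the usual rounding of a membership oracle for $K$. Such an $x^*$ lies in $CK\cap H$ and satisfies $x^*+y\in[-1,1]^n$ by construction. What remains is to show that, with constant probability, at least $(\beta-\eta)n$ of the box constraints are tight.

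\textbf{Step 1: $g$ is close to $CK\cap H$.} First we pass from $\gamma_n(K)\ge e^{-\alpha n}$ to closeness of $g$ to the big body. By Gaussian isoperimetry, or the Gaussian concentration inequality applied to the $1$-Lipschitz function $\mathrm{dist}(\cdot,K)$, we get
\[
\Pr\bigl[\mathrm{dist}(g,\lambda K)\le \epsilon\sqrt n\bigr]\ge 1-e^{-\Omega(\epsilon^2 n)}
\]
for $\lambda=\lambda(\alpha,\epsilon)$ large. Indeed, $\lambda K$ contains a Gaussian neighborhood of $K$ once $\lambda$ is large, because $K$ contains a ball of radius $\gtrsim e^{-O(\alpha)}$ in a suitable sense. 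To account for $H$, write $g=g_H+g_{H^\perp}$. The part $g_{H^\perp}$ has norm about $\sqrt{n-\dim H}\le\sqrt{(1-\beta)n}$. We also need $g_H$ to be $\epsilon\sqrt n$-close to $\lambda K\cap H$, which I would get from the symmetric-body bound $\gamma_H(K\cap H)\ge\gamma_n(K)$ (a Khatri--\v{S}id\'ak type inequality for sections through the origin). The upshot is
\[
\mathrm{dist}(g,CK\cap H)\le \sqrt{(1-\beta)n}+\epsilon\sqrt n
\]
with high probability.

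\textbf{Step 2: the projection with few tight constraints is far from $g$.} This is the main obstacle. Suppose $x^*$ has a set $T$ of tight box coordinates with $|T|<(\beta-\eta)n$. Let $\epsilon'$ be a small constant depending on $\eta$, to be chosen in the counting below. The two cases are handled as follows.

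\emph{Case (a): the constraint $x\in CK$ is essentially not active.} The KKT conditions say that $g-x^*$ lies in the normal cone. The normal cone is spanned by $H^\perp$ together with the coordinate directions $e_i$, $i\in T$, and possibly a normal of $CK$. In this case $\|g-x^*\|$ is at least the norm of the component of $g$ orthogonal to $H^\perp+\spanop\{e_i:i\in T\}$, where $\dim(H^\perp+\spanop\{e_i:i\in T\})\le (1-\beta)n+|T|$. A direct norm bound for this one subspace runs the wrong way, so the intended route is the one in Rothvoss' proof. Take a union bound over the at most $2^n$ choices of $T$, together with concentration for the projection of $g$ onto a fixed subspace of codimension at least $\eta n$. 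This shows that $g$ cannot be $\epsilon'\sqrt n$-close to the face $\{x\in H\cap Q: x_i\text{ tight for } i\in T\}$ unless a rare event occurs. For each coordinate $i\notin T$ we have $|x^*_i-g_i|$ small, and the shifted strip $[-1-y_i,1-y_i]$ has Gaussian measure at least a constant $c_0$, since it contains $[0,1]$ or $[-1,0]$. So the number of non-tight coordinates with $g_i$ outside the strip is linear in $n$, and each such coordinate forces distance about $1$. The counting must be arranged so that this linear count beats the entropy $2^n$ of the union bound. I expect this to be the delicate part: one has to check that losing $\eta n$ tight coordinates costs $\Omega(\sqrt{\eta n})$ in distance, uniformly over the shift $y$.

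\emph{Case (b): $CK$ is active.} Then $x^*$ lies on $\partial(CK)$. In this case the distance from $g$ to $CK\cap H$ is essentially the distance from $g$ to $P$, and the bound comes directly from Step 1.

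\textbf{Conclusion.} To finish, choose $C$ (and hence $\lambda$) depending on $\alpha,\beta,\eta$. The choice should make the lower bound on $\|g-x^*\|$ from Step 2 in the few-tight-constraints event exceed the upper bound on $\|g-x^*\|$ from Step 1, which holds with probability $1-e^{-\Omega(n)}$. Then at least $(\beta-\eta)n$ coordinates are tight with constant probability. Repeating the sampling gives the polynomial-time construction.
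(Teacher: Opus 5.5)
The paper does not prove this statement; it imports it from \cite[Theorem~6]{ReisRothvoss2022}. Judged on its own, your proposal has a genuine gap. The one-shot algorithm you analyze cannot produce $(\beta-\eta)n$ tight coordinates, whatever $C$ is.

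Here is a concrete counterexample. Take $K=2\sqrt n\,B_2^n$, so $\gamma_n(K)\ge 3/4$, and take $H=\R^n$ (so $\beta=1$), $y=\mathbf 0$, and any $C\ge1$. Then $P=CK\cap H\cap Q=[-1,1]^n$, and the projection $x^*$ of $g$ is just coordinatewise truncation. Its tight coordinates are exactly $\{i:|g_i|\ge1\}$, and there are about $0.32n$ of them with probability $1-e^{-\Omega(n)}$. The theorem demands $(1-\eta)n$, and even the case $\beta=1$, $\eta=1/2$ used in Corollary~\ref{thm:volume-pc} fails. Enlarging $C$ only makes $CK$ less active, and resampling does not help. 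More generally, whenever $CK$ does not bind, the tight count concentrates around $\sum_i\Pr[g_i\notin(-1-y_i,1-y_i)]\le 0.53n$.

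So the conclusion of Step~2 is false, not merely delicate. The case split is also not the right mechanism. Case~(b) gives no information about box constraints at all. In case~(a), the claims that $|x^*_i-g_i|$ is small for $i\notin T$, and that each such coordinate ``forces distance about $1$'', are unsupported.

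Rothvoss's argument actually runs as follows. If $T$ is the tight set, then $x^*$ is also the projection of $g$ onto $CK\cap H\cap S_T$, where $S_T:=\{x:x_i+y_i\in[-1,1]\ \forall i\in T\}$. You bound this distance from above via a Gaussian-measure lower bound for $CK\cap H\cap S_T$ together with isoperimetry. That measure bound decays like $c^{|T|}$ and must also survive a union bound over the choices of $T$. You bound the same distance from below by $d(g,H\cap Q)$. The comparison therefore only certifies $|T|\ge\delta n$ for some small $\delta$.

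Reaching $(\beta-\eta)n$ requires an additional mechanism. One option is to repeatedly freeze the tight coordinates and recurse on $H'=H\cap\{x_i=0:i\in T\}$ with the updated shift. This uses $\gamma_{H'}(K\cap H')\ge\gamma_n(K)\ge e^{-(\alpha/\eta)\dim H'}$, valid for symmetric convex $K$ while $\dim H'\ge\eta n$.

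Two inputs to your Step~1 also need repair. First, the claim that $K$ contains a ball of radius $e^{-O(\alpha)}$ is false. A thin slab $\{|x_1|\le 2e^{-\alpha n}\}$ truncated by a large ball has measure about $e^{-\alpha n}$, and dilating it by a constant changes its measure only by a constant factor. Handling large $\alpha$ therefore needs a genuinely different argument. Second, the shifted strips in $S_T$ are not symmetric, so Khatri--\v{S}id\'ak does not directly give the product lower bound you would need.
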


We use the following consequence. See also~\cite[Lemma~8]{DadushNikolovTalwarTomczak2018}.

\begin{corollary}\label{thm:volume-pc}
Let $n\ge1$, and let $K\subseteq\R^n$ be a symmetric convex body such that
\[
  \vol_{|S|}(K_S) \ge 1
  \qquad\text{for every }S\subseteq[n].
\]
Then for every $y\in[-1,1]^n$ there is a polynomial-time computable $y'\in[-1,1]^n$ such that $\abs{\{i\in[n]:y'_i\in\{\pm1\}\}}\ge \frac n2$
and $\|y-y'\|_K\lesssim 1$. 
\end{corollary}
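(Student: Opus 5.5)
The plan is to feed Theorem~\ref{thm:rr-coordinate-volume} into Theorem~\ref{thm:rr-shifted-pc}, with the parameters chosen so that the constant in the conclusion is universal.

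First, the volume hypothesis gives a Gaussian measure bound. We have $\vol_{|S|}(K_S)\ge 1$ for every $S\subseteq[n]$, including $S=\emptyset$ by the convention $\vol_0(K_\emptyset)=1$. So the minimum over $S$ in Theorem~\ref{thm:rr-coordinate-volume} is at least $1$, and that theorem yields $\gamma_n(K)^{1/n}\ge c$ for a universal constant $c>0$. Equivalently, $\gamma_n(K)\ge e^{-\alpha n}$ with $\alpha:=\max\{\log(1/c),1\}$ (natural logarithm here), which is a universal constant.

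Next, apply Theorem~\ref{thm:rr-shifted-pc} to $K$, the point $y$, and the full space $H=\R^n$. This gives $\dim H=n$, so we may take $\beta=1$, and we take $\eta=1/2$. The theorem produces, in polynomial time, a vector $x\in C K$ with $C=C(\alpha,1,1/2)$ universal, such that $x+y\in[-1,1]^n$ and at least $(1-1/2)n=n/2$ coordinates of $x+y$ lie in $\{\pm1\}$.

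Finally, set $y':=y+x$. Then $y'\in[-1,1]^n$ and at least $n/2$ of its coordinates equal $\pm1$. Moreover $\|y-y'\|_K=\|x\|_K\le C\lesssim 1$, using the symmetry of $K$.

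The only step that needs care is confirming that the constants are universal. The constant $\alpha$ depends only on the implicit constant in Theorem~\ref{thm:rr-coordinate-volume}, and $\beta$ and $\eta$ are fixed, so $C$ is universal. Degenerate cases cause no trouble, since $n\ge1$ is assumed. If one wanted $H$ to be a proper subspace, the same argument would apply with $\beta=\dim H/n$, but the full space suffices here.
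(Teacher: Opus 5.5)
Your proposal is correct and follows the same route as the paper. You use Theorem~\ref{thm:rr-coordinate-volume} to get $\gamma_n(K)^{1/n}\gtrsim 1$, then apply Theorem~\ref{thm:rr-shifted-pc} with $H=\R^n$ to get $x\in CK$ and set $y'=x+y$. Your explicit choices $\beta=1$, $\eta=1/2$ and universal $\alpha$ just spell out the constants the paper leaves implicit.
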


\begin{proof}
The volume assumption and Theorem~\ref{thm:rr-coordinate-volume} imply $\gamma_n(K)^{1/n}\gtrsim1$. Apply Theorem~\ref{thm:rr-shifted-pc} to obtain $x\in CK$ such that $y':=x+y$ belongs to $[-1,1]^n$ and has at least $n/2$ coordinates
in $\{\pm1\}$. Then $\|y-y'\|_K=\|x\|_K\lesssim1$.
\end{proof}

Finally, we use the inverse Santal\'o inequality of Bourgain and Milman~\cite{BourgainMilman1987}.

\begin{theorem}\label{thm:inverse-santalo}
For every symmetric convex body $K \subset \mathbb{R}^n$,
\[
  \vol_n(K)^{1/n}\vol_n(K^\circ)^{1/n}
  \gtrsim
  \frac1n.
\]
\end{theorem}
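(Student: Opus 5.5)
The last statement is Theorem~\ref{thm:inverse-santalo}, the Bourgain--Milman inverse Santal\'o inequality. It is quoted from the literature rather than proved here, so the natural plan is to cite~\cite{BourgainMilman1987}. If a self-contained argument were wanted, I would follow the standard route through the $MM^*$-estimate and an isomorphic-symmetrization iteration. Both the statement and the proof are invariant under $GL_n$, since $\vol(TK)\vol((TK)^\circ)=\vol(K)\vol(K^\circ)$. This lets me put $K$ in a convenient position.

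\textbf{Step 1.} By Pisier's $\ell$-position theorem (Figiel--Tomczak-Jaegermann combined with Pisier's bound on the $K$-convexity constant), there is a position with
\[
  M(K)\,M^*(K)\lesssim \log n ,
\]
where $M$ and $M^*$ are the mean norm and the mean width on the sphere. By Urysohn's inequality, $\vol(K)^{1/n}\lesssim M^*(K)/\sqrt n$. By Jensen applied to the polar, $\vol(K^\circ)^{1/n}\gtrsim 1/(\sqrt n\,M(K))$, since $M(K)=M^*(K^\circ)$. Urysohn cannot be applied to $K^\circ$ in this direction, so instead I use the integral of the norm, and the same bounds with the roles of $K$ and $K^\circ$ exchanged. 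Together these give the weaker estimate
\[
  \bigl(\vol(K)\vol(K^\circ)\bigr)^{1/n}\gtrsim \frac{1}{n\log n}.
\]

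\textbf{Step 2.} To remove the logarithm, I would use Milman's quotient-of-subspace theorem or, equivalently, the iterative isomorphic symmetrization. Define a sequence of bodies $K_0=K$, $K_{j+1}=\conv(K_j\cup t_j E_j)\cap (t_j E_j)^{\circ}$-type regularizations. Here each $E_j$ is an ellipsoid coming from the $\ell$-position and the $t_j$ are chosen so that the covering numbers $N(K_j,K_{j+1})$, $N(K_{j+1},K_j)$ and their polars are $\exp(n/\alpha_j)$ with $\sum 1/\alpha_j<\infty$. Volumes and polar volumes change by at most bounded factors in the $n$-th root. After $O(\log\log n)$ steps the body is isomorphic to an ellipsoid, for which the product equals $\vol(B_2^n)^2\approx (c/n)^n$.

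\textbf{Main obstacle.} The hard step is controlling the covering numbers at each stage. This needs the dual Sudakov inequality and a Sudakov-type bound, $\log N(K,tB_2^n)\lesssim n\,(M^*/t)^2$, together with the $\ell$-position estimate on each intermediate body. Since the result is classical, I would simply invoke the citation in the paper.
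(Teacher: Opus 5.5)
Your actual plan, which is to invoke Bourgain--Milman, is exactly what the paper does: Theorem~\ref{thm:inverse-santalo} is quoted from the literature and is not proved there. One caution about the optional sketch: in Step~1 your two Jensen bounds have $K$ and $K^\circ$ swapped. The correct bounds are $\vol_n(L)^{1/n}\gtrsim 1/(\sqrt n\,M(L))$ for $L=K$ and for $L=K^\circ$, with $M(K^\circ)=M^*(K)$. Your version $\vol_n(K^\circ)^{1/n}\gtrsim 1/(\sqrt n\,M(K))$ already fails for $K=B_\infty^n$, where $\vol_n(B_1^n)^{1/n}\approx 2e/n$ while $1/(\sqrt n\,M(B_\infty^n))\approx 1/\sqrt{\log n}$. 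Urysohn, being an upper bound, plays no role, although the product bound $1/(n\,M(K)M^*(K))\gtrsim 1/(n\log n)$ that you end with is correct.
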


\section{A section radius bound for $K_1$}

We derive an upper bound on the section radii $r_k (K_1)$ of the body defined in \eqref{eq:isotropic} via Gordon's escape theorem (Theorem~\ref{thm:escape}). First, we need to upper bound its width.

\begin{proposition}\label{prop:localized-width}
For every $r \ge 0$,
\[
  \wg\bigl(K_1\cap rB_2^d\bigr)
  \lesssim \sqrt{\log(1+dr^2)}.
\]
\end{proposition}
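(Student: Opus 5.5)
The plan is to bound the support function of $K_1\cap rB_2^d$ pointwise in $g$ by a duality/splitting argument, and then average over $g$. For any decomposition $g=a+b$ and any $x\in K_1\cap rB_2^d$ we have
\[
\ip{g}{x}\le \|a\|_{K_1^\circ}\|x\|_{K_1}+\|b\|_2\|x\|_2\le \|a\|_{Z}+r\|b\|_2 ,
\]
where $K_1^\circ=Z=\{\sum_i c_i s_i u_i: |s_i|\le 1\}$ in Lewis position. It therefore suffices to exhibit, for each $g$, a good splitting and then take expectations. The natural splitting comes from the isotropic decomposition \eqref{eq:isotropic}: $g=\sum_i c_i t_i u_i$ with $t_i:=\ip{g}{u_i}$. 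Each $t_i\sim N(0,1)$ because $u_i$ is a unit vector.

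Fix a threshold $\lambda>0$ and set
\[
a:=\sum_i c_i t_i\mathbf 1\{|t_i|\le\lambda\}u_i,\qquad b:=\sum_i c_i t_i\mathbf 1\{|t_i|>\lambda\}u_i .
\]
Then $a\in\lambda Z$ trivially, so $\|a\|_Z\le\lambda$. For $b$, I will test against $y\in B_2^d$ and apply the weighted Cauchy--Schwarz inequality:
\[
\ip{b}{y}\le\Big(\sum_i c_i t_i^2\mathbf 1\{|t_i|>\lambda\}\Big)^{1/2}\Big(\sum_i c_i\ip{y}{u_i}^2\Big)^{1/2},
\]
and the last factor equals $\|y\|_2\le1$ by isotropy. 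Taking expectations with Jensen, and using $\sum_i c_i=\operatorname{tr} I_d=d$, gives
\[
\E\|b\|_2\le\sqrt{d\,\E\bigl[t^2\mathbf 1\{|t|>\lambda\}\bigr]}\lesssim \sqrt{d\,\lambda}\,e^{-\lambda^2/4}\qquad(\lambda\ge1),
\]
by the standard Gaussian tail estimate $\E[t^2\mathbf 1\{|t|>\lambda\}]\lesssim\lambda e^{-\lambda^2/2}$.

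Combining the two bounds yields
\[
\wg(K_1\cap rB_2^d)\lesssim \lambda+r\sqrt{d\lambda}\,e^{-\lambda^2/4}
\]
for every $\lambda\ge1$. I will then split into two cases.
\begin{itemize}
\item If $dr^2\le 1$, use the trivial bound $\wg(K_1\cap rB_2^d)\le \wg(rB_2^d)\le r\sqrt d\lesssim \log(1+dr^2)^{1/2}$, since $\log(1+s)\gtrsim s$ for $s\le1$. (Alternatively, take $\lambda$ of order one; this is a minor bookkeeping point.)
\item If $dr^2\ge1$, choose $\lambda^2=4\log(1+dr^2)+C$. Then $r\sqrt d\,e^{-\lambda^2/4}\lesssim r\sqrt d/(1+dr^2)\le 1$, so the second term is $\lesssim\sqrt\lambda\lesssim\lambda$. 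The total is therefore $\lesssim\sqrt{\log(1+dr^2)}$.
\end{itemize}

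The only step needing care is the estimate for $\|b\|_2$. The key point is that isotropy turns $\sum_i c_i\ip{y}{u_i}^2$ into exactly $\|y\|_2^2$ and makes $\sum_i c_i=d$. This is what makes the tail mass scale like $d\,e^{-\lambda^2/2}$, and hence what makes a logarithmic threshold suffice. I expect no other obstacle: the remaining steps are the duality inequality and a one-variable optimization in $\lambda$.
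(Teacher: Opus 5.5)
Your proof is correct, but it takes a different route from the paper.

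The paper applies H\"older's inequality with conjugate exponents $(p,q)$ directly to $\ip{g}{x}=\sum_i c_i\ip{g}{u_i}\ip{x}{u_i}$. It then bounds $\|x\|_{K_p}\le r^{2/q}$ via Lemma~\ref{lem:weighted-lewis-local-interpolation}, and uses the moment bound $(\E|\ip{g}{u_i}|^q)^{1/q}\lesssim\sqrt q$ together with $\sum_i c_i=d$ to reach $\sqrt q\,(dr^2)^{1/q}$. Finally it chooses $q\approx\log(dr^2)$.

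You instead truncate the coefficients $t_i=\ip{g}{u_i}$ of the isotropic expansion $g=\sum_i c_it_iu_i$ at level $\lambda$. The bounded part lies in $\lambda K_1^\circ$, the tail part is controlled in $\ell_2$ by weighted Cauchy--Schwarz plus isotropy, and you then optimize $\lambda\approx\sqrt{\log(dr^2)}$. In effect, you bound the support function of $K_1\cap rB_2^d$ by an explicit decomposition $g=a+b$ against $\|\cdot\|_{K_1^\circ}+r\|\cdot\|_2$.

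The two arguments are the tail version and the moment version of the same computation, and they use the same inputs: isotropy, $\sum_i c_i=d$, and standard Gaussian marginals. Yours is self-contained and avoids the interpolation bodies. The paper's is slightly shorter, and it is phrased through the bodies $K_p$ and the choice $q\asymp\log(\cdot)$ that Section~4 reuses (Lemmas~\ref{lem:interpolation-body} and~\ref{lem:Kp-gamma}, Proposition~\ref{prop:localized-image-width}), so it fits the later machinery better.

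Your details check out, including the case split. For $dr^2\le 1$ the trivial bound $r\sqrt d$ suffices. For $dr^2\ge1$, your $\lambda$ satisfies $\lambda\ge 1$ and $e^{-\lambda^2/4}\lesssim(1+dr^2)^{-1}$, whether $\log$ is binary or natural.
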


\begin{proof}
Let $x\in K_1\cap rB_2^d$. For $g\sim N(0,I_d)$, \eqref{eq:isotropic} gives
\[
  \ip{g}{x}=\sum_{i=1}^m c_i\ip{g}{u_i}\ip{x}{u_i}.
\]
Let $q\ge2$ be chosen later and put $p=q/(q-1)$.  Then
Lemma~\ref{lem:weighted-lewis-local-interpolation} gives
\[
  \|x\|_{K_p}\le r^{2/q}.
\]
Thus H\"older's inequality yields
\[
  |\ip{g}{x}|
  \le
  \left(\sum_{i=1}^m c_i |\ip{x}{u_i}|^p\right)^{1/p}\left(\sum_{i=1}^m c_i |\ip{g}{u_i}|^q\right)^{1/q}
  \le
  r^{2/q}\left(\sum_{i=1}^m c_i |\ip{g}{u_i}|^q\right)^{1/q}.
\]
By concavity of $t\mapsto t^{1/q}$, this gives
\[
  \wg(K_1\cap rB_2^d) = \E \sup_{x \in K_1\cap rB_2^d} |\ip{g}{x}|
  \le
  r^{2/q}\left(\sum_{i=1}^m c_i\,\E|\ip{g}{u_i}|^q\right)^{1/q}.
\]
For $u_i\in \partial B_2^d$, $\ip{g}{u_i} \sim N(0,1)$, so
$(\E|\ip{g}{u_i}|^q)^{1/q}\lesssim \sqrt q$.  Since $\sum_{i=1}^m c_i = d$ from~\eqref{eq:isotropic},
\[
  \wg(K_1\cap rB_2^d)
  \lesssim \sqrt q\,(dr^2)^{1/q}.
\]
If $t:=dr^2 < 4$, take $q=2$ and use $t \lesssim \log(1+t)$ on this range. If
$t \ge 4$, take $q=\log t$, so $t^{1/q}=2$.  In either case, the above inequality yields
\[
  \wg(K_1\cap rB_2^d)
  \lesssim \sqrt{\log(1+dr^2)}. \qedhere
\]
\end{proof}

\begin{corollary}
\label{cor:localized-width-escape-scale}
For $k \in [1,d]$ and $\lambda\ge1$, take $r^2 = \lambda\frac{\log(2d/k)}{k}$. Then 
\[
  \wg\bigl((\tfrac{1}{r} K_1) \cap \partial B_2^d\bigr)^2
  \lesssim \frac{k\log(2\lambda)}{\lambda}.
\]
\end{corollary}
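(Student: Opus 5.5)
We rescale Proposition~\ref{prop:localized-width} by $1/r$. Since $(\tfrac1r K_1)\cap\partial B_2^d \subseteq (\tfrac1r K_1)\cap B_2^d = \tfrac1r\bigl(K_1\cap rB_2^d\bigr)$, and Gaussian width is monotone under inclusion and $1$-homogeneous under scaling, we get
\[
  \wg\bigl((\tfrac1r K_1)\cap\partial B_2^d\bigr)\le \frac1r\,\wg\bigl(K_1\cap rB_2^d\bigr)\lesssim \frac{1}{r}\sqrt{\log(1+dr^2)}.
\]
Squaring, it suffices to show
\[
  \frac{\log(1+dr^2)}{r^2}\lesssim \frac{k\log(2\lambda)}{\lambda}
\]
for $r^2=\lambda\log(2d/k)/k$.

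Substituting, the left side equals $\frac{k}{\lambda\log(2d/k)}\log\bigl(1+\tfrac{\lambda d}{k}\log(2d/k)\bigr)$. It therefore remains to prove the elementary inequality
\[
  \log\Bigl(1+\lambda \tfrac dk\log\tfrac{2d}{k}\Bigr)\lesssim \log(2\lambda)\,\log\tfrac{2d}{k}.
\]
Write $s:=d/k\ge1$, so that $\log(2s)\ge 1$ (binary log). We bound
\[
  1+\lambda s\log(2s)\le 2\lambda\cdot 2s\cdot 2s=(2\lambda)(2s)^2,
\]
using $\lambda\ge1$, $s\ge1$, and $\log(2s)\le 2s$. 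Taking logs,
\[
  \log\bigl(1+\lambda s\log(2s)\bigr)\le \log(2\lambda)+2\log(2s).
\]
Because $\log(2\lambda)\ge1$ and $\log(2s)\ge1$, each of the two terms on the right is at most $\log(2\lambda)\log(2s)$. Hence the sum is at most $3\log(2\lambda)\log(2s)$, which is the required inequality.

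No step here is a real obstacle. The only point needing care is keeping the constants uniform: the "$1+$" inside the logarithm and the case of small $d/k$ are handled by the lower bounds $\log(2\lambda)\ge1$ and $\log(2s)\ge1$, which is exactly why the statement uses $\log(2\lambda)$ and $\log(2d/k)$ rather than $\log\lambda$ and $\log(d/k)$.
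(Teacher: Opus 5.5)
Your proof is correct and follows the paper's argument. You rescale Proposition~\ref{prop:localized-width} through the inclusion $(\tfrac1r K_1)\cap\partial B_2^d\subseteq \tfrac1r(K_1\cap rB_2^d)$ and then verify the same elementary logarithmic inequality; the only cosmetic difference is that you parametrize by $s=d/k$ and use $\log(2s)\le 2s$, whereas the paper writes $d/k=2^{\alpha-1}$ with $\alpha=\log(2d/k)$.
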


\begin{proof}
By Proposition~\ref{prop:localized-width} and homogeneity,
\[
  \wg\bigl((\tfrac{1}{r} K_1) \cap \partial B_2^d\bigr)^2 = \wg\bigl(r^{-1} (K_1 \cap r\partial B_2^d)\bigr)^2 \le \wg\bigl(r^{-1}(K_1\cap rB_2^d)\bigr)^2
  \lesssim \frac{\log(1+dr^2)}{r^2}.
\]
Put $\alpha:=\log(\tfrac{2d}{k}) \ge1$. By assumption, $r^2 = \tfrac{\lambda\alpha}{k}$, so
\[
   \wg\bigl((\tfrac{1}{r} K_1) \cap \partial B_2^d\bigr)^2 \lesssim \frac{\log(1+dr^2)}{r^2}
  =
  \frac{k}{\lambda} \cdot \alpha^{-1}
  \log\Big(1+\tfrac{\lambda d  \alpha}{k}\Big).
\]
Moreover, since $\lambda\ge1$ and $\alpha\ge1$,
\[
  \alpha^{-1}
  \log\Big(1+\tfrac{\lambda d  \alpha}{k}\Big)
  =\alpha^{-1}\log(1+\lambda 2^{\alpha-1}\alpha)
  \le \alpha^{-1}\log(2\lambda \cdot 2^{\alpha-1}\alpha)
  \lesssim \log(2\lambda). \qedhere
\]
\end{proof}

\begin{theorem}\label{thm:special-lewis}
For every integer $k \in [1, d]$,
\[
  r_k\bigl(K_1\bigr)
  \lesssim \sqrt{\frac{\log(2d/k)}{k}}.
\]
Equivalently, there is a subspace $F\subseteq\R^d$ with $\codim F<k$ such
that
\[
  \sup_{x\in K_1\cap F}\|x\|_2
  \lesssim
  \sqrt{\frac{\log(2d/k)}{k}}.
\]
\end{theorem}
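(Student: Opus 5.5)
We apply Gordon's escape theorem (Theorem~\ref{thm:escape}) to the set $S:=(\tfrac1r K_1)\cap\partial B_2^d$, with $r^2=\lambda\frac{\log(2d/k)}{k}$ and $\lambda\ge1$ a large universal constant to be fixed. Corollary~\ref{cor:localized-width-escape-scale} bounds the width of $S$.

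\textbf{Choosing $\lambda$.} Corollary~\ref{cor:localized-width-escape-scale} gives $\wg(S)^2\le C_0\,k\log(2\lambda)/\lambda$ for a universal $C_0$. Since $\log(2\lambda)/\lambda\to0$, we can choose $\lambda$ (depending only on $C_0$ and $m_{\mathrm E}$) so that $m_{\mathrm E}\,C_0\log(2\lambda)/\lambda\le 1/2$. Then $m:=\lfloor k/2\rfloor$ satisfies $m\ge m_{\mathrm E}\wg(S)^2$.

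\textbf{Applying the escape theorem.} Theorem~\ref{thm:escape} also requires $m\ge m_{\mathrm E}$. If $k\ge 2m_{\mathrm E}+2$, this holds, and we obtain a subspace $F$ with $\codim F\le k/2<k$ and $S\cap F=\emptyset$. This says $F\cap\tfrac1rK_1$ contains no unit vector. By convexity and symmetry, $F\cap\tfrac1r K_1$ is a convex set containing the origin, so it lies in the open unit ball. Hence $\sup_{x\in K_1\cap F}\|x\|_2\le r\lesssim\sqrt{\log(2d/k)/k}$.

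\textbf{Small $k$.} For $k<2m_{\mathrm E}+2$, take $F=\R^d$. Lemma~\ref{lem:K1-subset-B2} gives $r_k(K_1)\le r_1(K_1)\le 1$, and since $k$ is bounded and $\log(2d/k)\ge1$, we have $1\lesssim\sqrt{\log(2d/k)/k}$.

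The main point needing care is the bookkeeping of constants: the corollary hides a universal constant, and we must ensure that a single $\lambda$ makes the escape condition hold at codimension strictly below $k$. The decay of $\log(2\lambda)/\lambda$ handles this. Everything else follows directly from the stated results.
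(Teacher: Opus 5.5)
Your proof is correct and follows the paper's argument: Gordon's escape theorem applied to $S=(\tfrac1r K_1)\cap\partial B_2^d$ at scale $r^2=\lambda\log(2d/k)/k$, with $\lambda$ a large constant fixed via Corollary~\ref{cor:localized-width-escape-scale}, and your small-$k$ case via $r_k(K_1)\le r_1(K_1)\le 1$ is more direct than the paper's detour through the width bound. One trivial slip is that for odd $k$ you have $\lfloor k/2\rfloor<k/2$, so you should take $m=\lceil k/2\rceil\le k-1$ (or $m=k-1$, as the paper does) to guarantee $m\ge m_{\mathrm E}\wg(S)^2$.
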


\begin{proof}
We apply Gordon's escape theorem. If
$k \le m_{\mathrm E}$, then combining Proposition~\ref{prop:covariance-width-section} ($r_1(K_1) \lesssim \wg(K_1)$), Lemma~\ref{lem:K1-subset-B2} ($K_1 \subseteq B^d_2$) and Proposition~\ref{prop:localized-width} for $r=1$ gives
\[
  r_k(K_1)\le r_1(K_1)\lesssim \wg(K_1) = \wg(K_1 \cap B^d_2) \lesssim \sqrt{\log(1+d)} \le \sqrt{\log(2d)} \lesssim \sqrt{\frac{\log(2d/k)}{k}}.
\]
Assume $k \ge m_{\mathrm E} + 1$. Take $r:=\sqrt{\lambda \tfrac{\log(2d/k)}{k}}$
where $\lambda$ is a constant to be determined and let $S:=(\tfrac{1}{r} K_1) \cap \partial B_2^d$. Corollary~\ref{cor:localized-width-escape-scale}
yields a constant $C > 0$ so that
\[
  \wg(S)^2
  \le C \frac{k\log(2\lambda)}{\lambda}.
\]
Taking $\lambda$ large enough so that $\frac{C \log(2\lambda)}{\lambda} \le \frac{1}{2m_{\mathrm E}}$, we obtain
\[
  k-1\ge m_{\mathrm E}\max\{1,\wg(S)^2\}.
\]
Theorem~\ref{thm:escape} gives $F\subseteq\R^d$ with $\codim F\le k-1<k$
and $S\cap F=\emptyset$.  
If $K_1 \cap F$ contained a point $x$ with $\|x\|_2 \ge r$, then by convexity of $K_1$, the
point $\frac{r}{\|x\|_2} x$ would lie in $K_1\cap r\partial B^d_2\cap F = r(S \cap F) = \emptyset$, absurd.
Thus 
\[
r_k(K_1) \le \sup_{x\in K_1\cap F}\|x\|_2\le r \lesssim \sqrt{\frac{\log(2d/k)}{k}}. \qedhere
\]
\end{proof}

\section{A section radius bound for linear images of $K_1$}

Next, we derive an upper bound on the radii $r_k(MK_1)$. We again apply Gordon's escape theorem, but the localized width bound is more subtle; first we need to interpolate between $K_1$ and the Euclidean ball.

\begin{lemma}\label{lem:interpolation-body}
Let $p \in (1,2]$, and put $q=p/(p-1)$.  For every integer $k \in [1, d]$,
\[
  r_k \bigl(K_1,K_p\bigr)
  \lesssim
  \Big(\frac{\log(2d/k)}{k}\Big)^{1/q}.
\]
\end{lemma}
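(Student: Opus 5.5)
Take the same subspace $F$ that Theorem~\ref{thm:special-lewis} supplies, and bound the $K_p$-norm on $K_1\cap F$ by interpolating between the $K_1$-norm and the Euclidean norm with Lemma~\ref{lem:weighted-lewis-local-interpolation}. No new escape argument should be needed: the $K_p$ bound is a pointwise consequence of the $\ell_2$ bound on the same section.

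Fix $k\in[1,d]$ and let $F\subseteq\R^d$ be the subspace from Theorem~\ref{thm:special-lewis}, so $\codim F<k$ and
\[
  \sup_{x\in K_1\cap F}\|x\|_2\le C\sqrt{\frac{\log(2d/k)}{k}}=:\rho .
\]
The ambient space $H=\spanop K_1$ is all of $\R^d$, since $K_1$ is full-dimensional. Hence $F$ is admissible in Definition~\ref{def:section-radius} for $r_k(K_1,K_p)$, and it suffices to show
\[
  \sup_{x\in K_1\cap F}\|x\|_{K_p}\lesssim\Big(\frac{\log(2d/k)}{k}\Big)^{1/q}.
\]

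Take $x\in K_1\cap F$. Then $\|x\|_{K_1}\le 1$, and the exponent $1-2/q$ is nonnegative because $q\ge 2$ when $p\in(1,2]$. Lemma~\ref{lem:weighted-lewis-local-interpolation} therefore gives
\[
  \|x\|_{K_p}\le \|x\|_{K_1}^{1-2/q}\|x\|_2^{2/q}\le \rho^{2/q}=C^{2/q}\Big(\frac{\log(2d/k)}{k}\Big)^{1/q}.
\]
Since $C^{2/q}\le\max\{1,C\}$, this is the claimed bound with a universal constant.

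I do not expect a real obstacle. The only points to check are that $1-2/q\ge 0$, so that $\|x\|_{K_1}\le 1$ can be used inside the power, and that the implicit constant stays uniform in $p$. Both hold because $q\in[2,\infty)$. The degenerate case $p=2$, $q=2$ is consistent: the bound reduces to Theorem~\ref{thm:special-lewis} itself, since $K_2=B_2^d$.
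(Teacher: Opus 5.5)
Your proposal is correct and matches the paper's proof: you take the subspace $F$ from Theorem~\ref{thm:special-lewis} and apply Lemma~\ref{lem:weighted-lewis-local-interpolation} pointwise on $K_1\cap F$. You also check explicitly that $1-2/q\ge 0$ and that the constant $C^{2/q}\le\max\{1,C\}$ is uniform in $p$, which the paper leaves implicit.
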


\begin{proof}
Theorem~\ref{thm:special-lewis} yields a subspace
$F\subseteq\R^d$ with $\codim F<k$ such that for $x \in F$,
\[
  \|x\|_2\lesssim \sqrt{\frac{\log(2d/k)}{k}} \|x\|_{K_1}.
\]
Lemma~\ref{lem:weighted-lewis-local-interpolation} gives
\[
  \|x\|_{K_p}
  \le \|x\|_{K_1}^{1-2/q}\|x\|_2^{2/q}
  \le \Big(\frac{\log(2d/k)}{k}\Big)^{1/q}\|x\|_{K_1}.
\]
Taking the supremum over $x\in K_1\cap F$ gives the claim.
\end{proof}

\begin{lemma}\label{lem:relative-entropy-MK1-MKp}
Let $p \in (1,2]$, and put $q=p/(p-1)$.  For every integer $k \in [1, d]$,
\[
  e_k \bigl(K_1,K_p\bigr)
  \lesssim
  \Big(\frac{\log(2d/k)}{k}\Big)^{1/q}.
\]
\end{lemma}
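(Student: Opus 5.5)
We will derive the entropy bound from the section-radius bound of Lemma~\ref{lem:interpolation-body} by applying Carl's inequality (Theorem~\ref{thm:carl-covering}) with $L=K_p$. The exponent $\alpha=1/q$ is not fixed, since $q\in[2,\infty)$ varies with $p$. So we will use $\alpha=1$, or some fixed $\alpha\ge 1/2$, and exploit the fact that the sequence $j\mapsto (\log(2d/j)/j)^{1/q}$ decays like $j^{-1/q}$ up to logarithms. This keeps the constant universal.

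\textbf{Step 1: apply Carl with a fixed exponent.} Take $\alpha=1$ in Theorem~\ref{thm:carl-covering}. Then
\[
  k\,e_k(K_1,K_p)\le \max_{j\le k} j\,e_j(K_1,K_p)\le C_1\max_{j\le k} j\,r_j(K_1,K_p)\lesssim \max_{j\le k} j\Big(\frac{\log(2d/j)}{j}\Big)^{1/q}.
\]

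\textbf{Step 2: show the maximum is attained at $j=k$.} We need the function $\phi(j)=j^{1-1/q}\log(2d/j)^{1/q}$ to be essentially increasing on $[1,d]$. Writing $\log\phi(j)=(1-\tfrac1q)\ln j+\tfrac1q\ln\log(2d/j)$, its derivative in $\ln j$ is
\[
  1-\frac1q-\frac{1}{q\,\ln(2d/j)}\cdot\frac{1}{1}\cdot c
\]
for an absolute constant $c$ coming from the binary logarithm. Because $q\ge2$ and $\log(2d/j)\ge1$, this derivative is bounded below by $1/2-O(1)\cdot\tfrac12$. That quantity may fail to be positive when $\log(2d/j)$ is near $1$, so monotonicity alone might break down. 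Instead we bound directly. For $j\le k$,
\[
  \phi(j)=j^{1-1/q}\log(2d/j)^{1/q}=k^{1-1/q}\log(2d/k)^{1/q}\,(j/k)^{1-1/q}\Big(1+\frac{\log(k/j)}{\log(2d/k)}\Big)^{1/q}.
\]
Set $s=k/j\ge1$. The correction factor is then at most $s^{-1/2}(1+\log s)^{1/2}$, using $1-1/q\ge 1/2$, $1/q\le 1/2$ and $\log(2d/k)\ge1$. This factor is bounded by a universal constant. Hence $\max_{j\le k}\phi(j)\lesssim\phi(k)$.

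\textbf{Step 3: conclude.} Dividing by $k$ gives $e_k(K_1,K_p)\lesssim(\log(2d/k)/k)^{1/q}$, with a constant independent of $p$.

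The only delicate point is Step 2: keeping the constant uniform in $q$ while choosing the Carl exponent independently of $q$. The bound above handles this. If the statement is instead intended with $K_p$ replaced by an image, as the label suggests, the same argument applies because linear maps preserve coverings.
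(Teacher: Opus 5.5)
Your proof is correct and is essentially the paper's argument. Both apply Carl's inequality with the fixed exponent $\alpha=1$ to the section-radius bound of Lemma~\ref{lem:interpolation-body}, substitute $s=k/j$, and note that $s^{-(1-1/q)}\bigl(1+\log s/\log(2d/k)\bigr)^{1/q}\le s^{-1/2}(1+\log s)^{1/2}$ is bounded uniformly in $q\ge 2$; the abandoned derivative computation in your Step 2 is unnecessary and can simply be deleted.
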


\begin{proof}
Since $1<p\le 2$, we have $q\ge 2$. By Lemma~\ref{lem:interpolation-body}, for every $1\le s\le k$,
\[
  r_s(K_1,K_p)
  \lesssim
  \left(\frac{\log(2d/s)}{s}\right)^{1/q}.
\]
Carl's inequality (Theorem~\ref{thm:carl-covering}) for $\alpha = 1$ gives
\[
  k e_k(K_1,K_p)
  \lesssim
  \max_{s \in [1,k]}
  s\left(\frac{\log(2d/s)}{s}\right)^{1/q}.
\]
Writing $u=k/s\ge1$, the last expression is
\[
  k^{1-1/q}u^{-(1-1/q)}(\log(2d/k)+\log u)^{1/q}.
\]
Since $\log(2d/k)\ge1$ and $q\ge2$,
\[
  u^{-(1-1/q)}(\log(2d/k)+\log u)^{1/q}
  \lesssim \log(2d/k)^{1/q},
\]
uniformly for $u\ge1$. Therefore
\[
  k e_k(K_1,K_p)
  \lesssim
  k^{1-1/q}\log(2d/k)^{1/q},
\]
and hence
\[
  e_k(K_1,K_p)
  \lesssim
  \left(\frac{\log(2d/k)}{k}\right)^{1/q}. \qedhere
\]
\end{proof}

\begin{lemma}\label{lem:Kp-gamma}
Let $p \in (1,2]$, and put $q=p/(p-1)$. Then
\[
  \Gamma\bigl(K_p\bigr)\lesssim \sqrt{q}.
\]
\end{lemma}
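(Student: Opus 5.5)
We need to bound $\Gamma(K_p)=\sup_{A\in\pazocal M(K_p^\circ)}\E_{g\sim N(0,A)}\|g\|_{K_p^\circ}$ by $\sqrt q$. The plan is to compute the dual norm explicitly, reduce to a single covariance constraint, and then mimic the H\"older computation of Proposition~\ref{prop:localized-width}.

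\textbf{Dual norm.} By H\"older's inequality, for every $x\in K_p$ and $y\in\R^d$ the map $x\mapsto(\ip{x}{u_i})_i$ embeds $K_p$ into the unit ball of $\ell_p(c)$, giving
\[
  \|y\|_{K_p^\circ}=\sup_{x\in K_p}\ip{x}{y}\le \inf\Big\{\Big(\sum_{i} c_i|a_i|^q\Big)^{1/q}: y=\sum_i c_i a_i u_i\Big\}.
\]
We do not need equality. Instead, we use the isotropic representation $y=\sum_i c_i\ip{y}{u_i}u_i$ from \eqref{eq:isotropic}, i.e. $a_i=\ip{y}{u_i}$, which yields
\[
  \|y\|_{K_p^\circ}\le\Big(\sum_i c_i|\ip{y}{u_i}|^q\Big)^{1/q}.
\]
This is exactly the H\"older step in the proof of Proposition~\ref{prop:localized-width}.

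\textbf{Covariance constraint.} Any $A\in\pazocal M(K_p^\circ)$ is a convex combination of $yy^\top$ with $y\in K_p^\circ$, so $u^\top A u\le\sup_{y\in K_p^\circ}\ip{y}{u}^2=\|u\|_{K_p}^2$ for all $u$. For a unit vector $u_i$ we compare $K_p$ with $B_2^d$. The analogue of Lemma~\ref{lem:K1-subset-B2} for $K_p$ is $K_p\subseteq B_2^d$. To see this, take $x\in K_p$. Then $\|x\|_2^2=\sum c_i|\ip{x}{u_i}|^2\le \|x\|_2^{2-p}\sum c_i|\ip{x}{u_i}|^p\le\|x\|_2^{2-p}$, so $\|x\|_2\le1$. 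Hence $B_2^d\subseteq K_p^\circ$. This inclusion points the wrong way for our purpose, since we need $\|u_i\|_{K_p}\le1$, that is, $u_i$ lying in $K_p$'s... we need $\ip{y}{u_i}^2\le 1$ for $y\in K_p^\circ$, which is the statement $u_i\in K_p$ up to scaling.

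Instead, we bound directly. For $y\in K_p^\circ$ we have $|\ip{y}{u_i}|\le\|u_i\|_{K_p}$, and
\[
  \|u_i\|_{K_p}^p=\sum_j c_j|\ip{u_i}{u_j}|^p\le\sum_j c_j\ip{u_i}{u_j}^2=1,
\]
using $p\le2$ and $|\ip{u_i}{u_j}|\le1$. So $u_i^\top A u_i\le1$ for every $i$.

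\textbf{Gaussian moment.} For $g\sim N(0,A)$, each $\ip{g}{u_i}$ is a centered Gaussian with variance at most $1$. By concavity of $t\mapsto t^{1/q}$,
\[
  \E\|g\|_{K_p^\circ}\le\Big(\sum_i c_i\E|\ip{g}{u_i}|^q\Big)^{1/q}\lesssim\sqrt q\,\Big(\sum_i c_i\Big)^{1/q}=\sqrt q\, d^{1/q}.
\]
This carries an unwanted factor $d^{1/q}$, so the step above is too lossy. This is the main obstacle.

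\textbf{Removing the $d^{1/q}$ factor.} The fix is to use the full strength of $A\in\pazocal M(K_p^\circ)$ rather than only $u_i^\top Au_i\le1$. Write $A=\sum_\ell\lambda_\ell y_\ell y_\ell^\top$ with $y_\ell\in K_p^\circ$, so that
\[
  \sum_i c_i (u_i^\top Au_i)^{q/2}\le\sup_{y\in K_p^\circ}\sum_i c_i|\ip{y}{u_i}|^{q}\le\,?
\]
The first inequality needs care, since $q/2\ge1$ makes $A\mapsto(u^\top Au)^{q/2}$ convex, so the supremum over the convex hull is attained at extreme points $yy^\top$. The goal is then to show $\sum_ic_i|\ip{y}{u_i}|^q\le1$ for $y\in K_p^\circ$, i.e. that $y\mapsto(\ip{y}{u_i})_i$ maps $K_p^\circ$ into the $\ell_q(c)$ unit ball. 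Via the Lewis property, this should follow because $\ell_p(c)$-norm duality combined with the orthogonal projection $P$ onto the range of $x\mapsto(\ip{x}{u_i})$ (in $L_2(c)$) is bounded on $\ell_q(c)$. I expect that boundedness of $P$ on $L_q(c)$ to be the delicate point: it is exactly where Lewis position is needed, and I would first try to prove it by interpolating between the trivial $L_2$ bound and an $L_\infty$ bound obtained from $|\ip{u_i}{u_j}|\le1$ together with $\sum_j c_j\ip{u_i}{u_j}^2=1$.

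Granting this, the computation closes:
\[
  \E\|g\|_{K_p^\circ}\lesssim\sqrt q\Big(\sum_i c_i (u_i^\top Au_i)^{q/2}\Big)^{1/q}\le\sqrt q.
\]
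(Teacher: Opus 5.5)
There is a genuine gap, and it is at the step you flagged as delicate. You need $\sum_i c_i\abs{\ip{y}{u_i}}^q\le C^q$ for every $y\in K_p^\circ$ with $C$ absolute. Equivalently, the $L_2(c)$-orthogonal projection $P$ onto $\{(\ip{x}{u_i})_i:x\in\R^d\}$ would have to be uniformly bounded on $L_q(c)$. This is false in Lewis position.

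Take $d=N$, $u_\omega=\omega/\sqrt N$ for $\omega\in\{-1,1\}^N$, and $c_\omega=N2^{-N}$, so that $\sum_\omega c_\omega u_\omega u_\omega^\top=I_N$. Khintchine's inequality gives $\norm{x}_{K_p}\ge 2^{-1/2}N^{1/2-1/q}\norm{x}_2$, hence $y:=2^{-1/2}N^{-1/q}\mathbf 1\in K_p^\circ$. On the other hand, for $2\le q\le N$,
\[
  \Big(\sum_\omega c_\omega\abs{\ip{y}{u_\omega}}^q\Big)^{1/q}
  =2^{-1/2}\Big(\E_\omega\Big|\frac{1}{\sqrt N}\sum_{j=1}^N\omega_j\Big|^q\Big)^{1/q}\gtrsim\sqrt q .
\]
With $A=yy^\top\in\pazocal M(K_p^\circ)$, the quantity $\sqrt q\,(\sum_ic_i(u_i^\top Au_i)^{q/2})^{1/q}$ that your chain ends with is therefore $\gtrsim q$. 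So your route cannot give better than $\Gamma(K_p)\lesssim q$; with $q=2\log(2d/k)$ this would cost an extra $\sqrt{\log(2d/k)}$ downstream.

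Your interpolation plan confirms this. The bound $\norm{P}_{L_\infty\to L_\infty}=\max_i\sum_jc_j\abs{\ip{u_i}{u_j}}\le\sqrt d$ is attained up to constants in this example, and Riesz--Thorin then only yields Lewis's bound $d^{1/2-1/q}$. Separately, your inequality $\norm{u_i}_{K_p}\le1$ is reversed: $\abs{t}^p\ge t^2$ for $\abs{t}\le1$ and $p\le2$, so $\norm{u_i}_{K_p}\ge1$, and in the example above it is of order $N^{1/2-1/q}$.

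The missing idea is not to commit to the canonical lifting $a_i=\ip{y}{u_i}$. Your first display is in fact an equality: with $\norm{z}_*:=(\sum_ic_i^{1-q}\abs{z_i}^q)^{1/q}$ one has $K_p^\circ=\{\sum_iz_iu_i:\norm{z}_*\le1\}$. The paper writes $A=\sum_\ell\lambda_\ell y_\ell y_\ell^\top$ and chooses, for each atom separately, a lifting $y_\ell=\sum_iz_i^{(\ell)}u_i$ with $\norm{z^{(\ell)}}_*\le1$. Then $g\sim N(0,A)$ has the law of $\sum_iv_iu_i$, where $v:=\sum_\ell\sqrt{\lambda_\ell}\,g_\ell z^{(\ell)}$ is a Gaussian vector in $\R^m$ with coordinate variances $\sigma_i^2=\sum_\ell\lambda_\ell(z_i^{(\ell)})^2$. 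Hence $\norm{g}_{K_p^\circ}\le\norm{v}_*$, and
\[
  \E\norm{v}_*\lesssim\sqrt q\Big(\sum_ic_i^{1-q}\sigma_i^q\Big)^{1/q}\le\sqrt q\Big(\sum_\ell\lambda_\ell\norm{z^{(\ell)}}_*^2\Big)^{1/2}\le\sqrt q,
\]
where the middle step is Minkowski's inequality in $\ell_{q/2}$. This is the fact that the type-$2$ constant of $L_q$, which is $\lesssim\sqrt q$, passes to quotients. No projection is involved, and neither isotropy nor Lewis position is used.
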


\begin{proof}
Let $Ux=(\ip{x}{u_1},\ldots,\ip{x}{u_m})$ and define the norm
\(\|v\|:=\left(\sum_{i=1}^m c_i|v_i|^p\right)^{1/p}\).
Then $\|x\|_{K_p}=\|Ux\|$.  Its dual norm is given by 
\(\|z\|_*:= \left(\sum_{i=1}^m c_i^{1-q}|z_i|^{q}\right)^{1/q}\).
We may write the polar body as
\[
  K_p^\circ
  =\left\{U^\top z:\|z\|_*\le1\right\}
  =\left\{\sum_{i=1}^m z_i u_i:
  \sum_{i=1}^m c_i^{1-q}|z_i|^{q}\le1\right\}.
\]

Take $P\in\pazocal M(K_p^\circ)$ attaining $\Gamma(K_p)$.  By Carath\'eodory's theorem, we may write
\[
  P=\sum_{\ell=1}^N\lambda_\ell y_\ell y_\ell^\top,
  \qquad
  y_\ell=U^\top z^{(\ell)},
  \qquad
  \|z^{(\ell)}\|_*\le1,
\]
where $\lambda_\ell\ge0$ and $\sum_{\ell=1}^N \lambda_\ell=1$.  Let
$g_1,\ldots,g_N$ be independent standard Gaussians.  Then $G \sim N(0, P)$ has the same
distribution as
\(\sum_{\ell=1}^N\sqrt{\lambda_\ell}g_\ell y_\ell =U^\top v\), where we denote \(v_i:=\sum_{\ell=1}^N\sqrt{\lambda_\ell}g_\ell z_i^{(\ell)}\). Also denote its covariance by $\sigma_i^2 := \sum_{\ell=1}^N\lambda_\ell(z_i^{(\ell)})^2$.
The vector
$U^\top v$ belongs to $\|v\|_*K_p^\circ$, so that
\[
  \|U^\top v\|_{K_p^\circ}\le \|v\|_*.
\]
Therefore, by concavity of $t \mapsto t^{1/q}$ and the Gaussian moment estimate,
\[
  \Gamma(K_p) = \E_{G \sim N(0, P)} \|G\|_{K_p^\circ}
  \le \E \|v\|_*
  \le
  \left(\E\sum_{i=1}^m c_i^{1-q}|v_i|^{q}\right)^{1/q} \lesssim
  \sqrt{q}
  \left(\sum_{i=1}^m c_i^{1-q}
  \sigma_i^q
  \right)^{1/q}.
\]
Since
$\sigma_i^2=\sum_{\ell=1}^N\lambda_\ell (z_i^{(\ell)})^2$,
the triangle inequality in the $\ell_{q/2}$ norm yields
\[
\begin{aligned}
  \left(\sum_{i=1}^m c_i^{1-q}\sigma_i^q\right)^{2/q}
  &=
  \left(\sum_{i=1}^m c_i^{1-q}
  \left|\sum_{\ell=1}^N \lambda_\ell (z_i^{(\ell)})^2\right|^{q/2}
  \right)^{2/q} \\
  &\le
  \sum_{\ell=1}^N \lambda_\ell
  \left(\sum_{i=1}^m c_i^{1-q}|z_i^{(\ell)}|^q\right)^{2/q} \\
  &=
  \sum_{\ell=1}^N \lambda_\ell \|z^{(\ell)}\|_*^2 \le 1.
\end{aligned}
\]
We conclude $\Gamma(K_p) \lesssim \sqrt{q}$, as claimed.
\end{proof}

\begin{proposition}\label{prop:localized-image-width}
Let $M:\R^d\to\R^n$ be linear.  For every integer $k\in[1,d]$ and every $r \ge 0$,
\[
  \wg(MK_1\cap rB_2^n)
  \lesssim
  r\sqrt k+ \norm{M}_{\pazocal{B}_n(K_1)}\sqrt{\log(2d/k)}.
\]
\end{proposition}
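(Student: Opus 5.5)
The plan is to feed a covering of $K_1$ by translates of a multiple of $K_p$ into Lemma~\ref{lem:cover-to-width}, after pushing everything forward by $M$. Fix $p\in(1,2]$ with conjugate exponent $q$, to be chosen at the end. Lemma~\ref{lem:relative-entropy-MK1-MKp} gives $N(K_1,tK_p)\le 2^{k-1}$ with
\[
  t\lesssim\Big(\frac{\log(2d/k)}{k}\Big)^{1/q}.
\]
Linear maps preserve coverings, so $N(MK_1,tMK_p)\le 2^{k-1}$ as well. The set $MK_1\cap rB_2^n$ is contained in $MK_1$ and hence is also covered by $2^{k-1}$ translates of $tMK_p$. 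It lies in $rB_2^n$, and $MK_p$ is symmetric and convex. Lemma~\ref{lem:cover-to-width} therefore gives
\[
  \wg(MK_1\cap rB_2^n)\lesssim r\sqrt k+t\,\wg(MK_p).
\]

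Next I bound $\wg(MK_p)$ using Lemma~\ref{lem:width-covariance-domination} together with Lemma~\ref{lem:Kp-gamma}:
\[
  \wg(MK_p)\le \norm{M}_{\pazocal B_n(K_p)}\,\Gamma(K_p)\lesssim \sqrt q\,\norm{M}_{\pazocal B_n(K_p)}.
\]
This is in terms of the $K_p$-norm of $M$, so I must compare it with $\norm{M}_{\pazocal B_n(K_1)}$. Lemma~\ref{lem:weighted-lewis-l1-lp-comparison} gives $\|x\|_{K_1}\le d^{1/q}\|x\|_{K_p}$, i.e.\ $K_p\subseteq d^{1/q}K_1$. Polarizing, $K_1^\circ\subseteq d^{1/q}K_p^\circ$, hence $\pazocal M(K_1^\circ)\subseteq d^{2/q}\pazocal M(K_p^\circ)$. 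From the definition of $\pazocal B_n$ this yields
\[
  \norm{M}_{\pazocal B_n(K_p)}\le d^{1/q}\norm{M}_{\pazocal B_n(K_1)}.
\]
(A more careful argument would compare $\pazocal M(K_1^\circ)$ and $\pazocal M(K_p^\circ)$ directly, but the crude inclusion suffices here.)

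Combining, the second term becomes
\[
  \sqrt q\,\Big(\frac{d\log(2d/k)}{k}\Big)^{1/q}\norm{M}_{\pazocal B_n(K_1)}.
\]
Now choose $q=\max\{2,\log(2d/k)\}$, with $p=q/(q-1)\in(1,2]$. Then $(2d/k)^{1/q}\lesssim1$. Since $\log(2d/k)\ge1$, also $(\log(2d/k))^{1/q}\lesssim 1$, because $x^{1/x}$ is bounded. The second term is therefore $\lesssim\sqrt{\log(2d/k)}\,\norm{M}_{\pazocal B_n(K_1)}$, which gives the claimed bound.

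The main obstacle is the comparison $\norm{M}_{\pazocal B_n(K_p)}\lesssim d^{1/q}\norm{M}_{\pazocal B_n(K_1)}$, specifically checking that the polar/$\pazocal M$ inclusion runs in the correct direction and with the correct power. The factor $d^{1/q}$ is harmless only because it merges with $k^{-1/q}$ into $(d/k)^{1/q}$, which is $O(1)$ precisely at the chosen $q\approx\log(2d/k)$.
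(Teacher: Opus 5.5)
Your proposal is correct and matches the paper's proof step for step: push the covering of $K_1$ by translates of $tK_p$ forward through $M$, apply Lemma~\ref{lem:cover-to-width} with $L=MK_p$, bound $\wg(MK_p)$ via $\Gamma(K_p)\lesssim\sqrt q$ together with $\norm{M}_{\pazocal B_n(K_p)}\le d^{1/q}\norm{M}_{\pazocal B_n(K_1)}$, and take $q\asymp\log(2d/k)$. The only difference is cosmetic: the paper chooses $q=2\log(2d/k)$ instead of $\max\{2,\log(2d/k)\}$, and either choice gives $\sqrt q\,(d\log(2d/k)/k)^{1/q}\lesssim\sqrt{\log(2d/k)}$.
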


\begin{proof}
Put $q:=2\log(2d/k) \ge 2,  p:=\frac{q}{q-1} \in (1,2]$. By Lemma~\ref{lem:relative-entropy-MK1-MKp}, for
$t \asymp\Big(\frac{\log(2d/k)}{k}\Big)^{1/q}$ one has
\[
 N(MK_1 \cap rB^n_2, t MK_p) \le N(MK_1, t MK_p)\le N(K_1, t K_p) \le 2^{k-1},
\]
where we have used that $N(MK, ML) \le N(K,L)$ for any convex bodies $K, L$ as we may simply apply $M$ to any cover.
Lemma~\ref{lem:cover-to-width}, applied with $L:=MK_p$, gives
\[
  \wg(MK_1 \cap rB^n_2)\lesssim r\sqrt k+t\,\wg(MK_p).
\]
By Lemma~\ref{lem:weighted-lewis-l1-lp-comparison}, $K_p\subseteq d^{1/q}K_1$.  Thus $K_1^\circ\subseteq d^{1/q}K_p^\circ$, so $\pazocal{B}_n(K_1) \subseteq d^{1/q} \pazocal{B}_n(K_p)$, and therefore
$\norm{M}_{\pazocal{B}_n(K_p)}\le d^{1/q} \norm{M}_{\pazocal{B}_n(K_1)} $.  Lemmas~\ref{lem:width-covariance-domination} and \ref{lem:Kp-gamma} thus yield
\[
  \wg(MK_p)
  \le \norm{M}_{\pazocal{B}_n(K_p)}\Gamma(K_p)
  \lesssim \norm{M}_{\pazocal{B}_n(K_1)}\,d^{1/q}\sqrt q.
\]
Therefore
\[
  t\,\wg(MK_p)
  \lesssim
  \norm{M}_{\pazocal{B}_n(K_1)}\sqrt q\left(\frac{d\log(2d/k)}{k}\right)^{1/q} \lesssim \norm{M}_{\pazocal{B}_n(K_1)}\sqrt{\log(2d/k)}.
\]
This proves the claim.
\end{proof}

\begin{theorem}\label{thm:main-lewis-geometric}
Let $M:\R^d\to\R^n$ be linear.  For every integer $k \in [1,d]$,
\[
  r_k\bigl(MK_1\bigr)
  \lesssim
  \norm{M}_{\pazocal{B}_n(K_1)}
  \sqrt{\frac{\log(2d/k)}{k}}.
\]
\end{theorem}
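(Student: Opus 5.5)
The proof will mirror that of Theorem~\ref{thm:special-lewis}, with Proposition~\ref{prop:localized-image-width} in place of Proposition~\ref{prop:localized-width}. By homogeneity we may assume $\norm{M}_{\pazocal{B}_n(K_1)}=1$; if the norm is zero then $M=0$ and the claim is trivial. We will apply Gordon's escape theorem (Theorem~\ref{thm:escape}) inside $H:=\spanop(MK_1)=\mathrm{im}\,M$, identified with $\R^{\dim H}$. Gaussian width is intrinsic to the subspace, so this identification is harmless.

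\emph{Small $k$.} If $k\le 4m_{\mathrm E}$, say, we use Lemma~\ref{lem:Bnorm-dominates-radius-one}, which gives $r_k(MK_1)\le r_1(MK_1)\le 1$. Since $\log(2d/k)\ge1$ and $k$ is bounded, $1\lesssim\sqrt{\log(2d/k)/k}$.

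\emph{Large $k$.} Set $r:=\sqrt{\lambda\log(2d/k)/k}$ with $\lambda\ge1$ a constant to be chosen, and let $S:=(\tfrac1r MK_1)\cap\partial B_2^n$. We apply Proposition~\ref{prop:localized-image-width} not with $k$ but with a smaller integer $k'=\lceil k/\mu\rceil\in[1,d]$, where $\mu$ is a large constant. This gives
\[
\wg(S)\le \tfrac1r\,\wg(MK_1\cap rB_2^n)\lesssim \sqrt{k'}+\frac{\sqrt{\log(2d/k')}}{r}.
\]
Since $\log(2d/k')\le\log(2d/k)+\log\mu\lesssim \log(\mu)\log(2d/k)$, we obtain
\[
\wg(S)^2\le C\Big(\frac{k}{\mu}+1+\frac{k\log\mu}{\lambda}\Big).
\]
We then choose $\mu$ large first and $\lambda$ large afterwards, together with the threshold on $k$, so that $m_{\mathrm E}\max\{1,\wg(S)^2\}\le k-1$.

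Theorem~\ref{thm:escape} then yields a subspace $F\subseteq H$ with $\codim_H F<k$ and $S\cap F=\emptyset$. As in the proof of Theorem~\ref{thm:special-lewis}, convexity of $MK_1$ shows that every $x\in MK_1\cap F$ satisfies $\|x\|_2<r$: otherwise its radial rescaling to the sphere of radius $r$ would give a point of $r(S\cap F)$. Hence $r_k(MK_1)\le r\lesssim\sqrt{\log(2d/k)/k}$.

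The main obstacle is the $r\sqrt k$ term in Proposition~\ref{prop:localized-image-width}. Applied at level $k$ itself, it contributes $\sqrt k$ to $\wg(S)$, which is exactly at the escape threshold and has no small constant. This is why the proposition must be invoked at the smaller level $k/\mu$. The cost is only a $\log\mu$ factor in the logarithm, which the choice of $\lambda$ then absorbs.
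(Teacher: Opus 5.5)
Your proposal is correct and follows essentially the same route as the paper. The paper also invokes Proposition~\ref{prop:localized-image-width} at a smaller level, $j=\lfloor\alpha k\rfloor$ with $\alpha$ small, where you use $\lceil k/\mu\rceil$. It absorbs the resulting $\log(1/\alpha)$ loss by taking the radius constant $\beta$ large, and then runs Gordon's escape theorem in $\spanop(MK_1)$. Your normalization $\norm{M}_{\pazocal{B}_n(K_1)}=1$ by homogeneity is only a cosmetic difference from carrying the norm through.
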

\begin{proof}

If \(\norm{M}_{\pazocal B_n(K_1)}=0\), then \(MK_1=\{0\}\) by
Lemma~\ref{lem:Bnorm-dominates-radius-one}, and the claim is trivial, so we may assume the norm is positive. Again by Lemma~\ref{lem:Bnorm-dominates-radius-one} we have $r_k(MK_1)\le r_1(MK_1)\le \norm{M}_{\pazocal{B}_n(K_1)}$, so the bound is immediate for $k = O(1)$, and we have the freedom to assume $k$ is larger than any chosen constant.

Fix an absolute constant $C_0$ for which Proposition~\ref{prop:localized-image-width} gives
\[
  \wg(MK_1\cap rB_2^n)
  \le C_0\left(r\sqrt j+\norm{M}_{\pazocal{B}_n(K_1)}\sqrt{\log(2d/j)}\right)
\]
for all integer $j \in [1, d]$ and $r \ge 0$.  Let $m_{\mathrm E}$ be the constant in Theorem~\ref{thm:escape}.  Choose universal constants $0<\alpha <1/2$ and $\beta \ge1$ so that
\[
  C_0\sqrt{2\alpha}
  +
  \frac{C_0}{\beta}
  \sqrt{1+\log(2/\alpha)}
  \le \frac{1}{\sqrt{2m_{\mathrm E}}}.
\]
Put $j:=\lfloor\alpha k\rfloor$ and
\[
  r:=\beta \norm{M}_{\pazocal{B}_n(K_1)}\sqrt{\frac{\log(2d/k)}{k}}.
\]
Increasing the universal lower bound on $k$ if necessary, we may assume
$1\le j\le d$, $j\le 2\alpha k$, and $j\ge \alpha k/2$. Let $H:=\spanop(MK_1)$ and
\[
  S:=\left(\frac1r MK_1\right)\cap \partial B_2^H.
\]
Then $S\subseteq r^{-1}(MK_1\cap rB_2^n)$, and Proposition~\ref{prop:localized-image-width} gives
\[
\begin{aligned}
  \wg(S)
  &\le r^{-1}\wg(MK_1\cap rB_2^n) \\
  &\le C_0\sqrt j+
  C_0\frac{\norm{M}_{\pazocal{B}_n(K_1)}\sqrt{\log(2d/j)}}{r}.
\end{aligned}
\]
Since $j\le2\alpha k$ and, using $j\ge\alpha k/2$,
\[
  \frac{\log(2d/j)}{\log(2d/k)}
  =1+\frac{\log(k/j)}{\log(2d/k)}
  \le 1+\log(2/\alpha),
\]
we get
\[
  \wg(S)
  \le \frac{\sqrt{k}}{\sqrt{2m_{\mathrm E}}}.
\]
For $k$ large enough, this implies
\[
  k-1\ge m_{\mathrm E}\max\{1,\wg(S)^2\}.
\]
 Theorem~\ref{thm:escape} yields a subspace
$F\subseteq H$ with $\codim_HF \le k-1 < k$ and $S \cap F =\varnothing$.

If $MK_1 \cap F$ contained a point $x$ with $\|x\|_2 \ge r$, then by convexity of $MK_1$, the
point $\frac{r}{\|x\|_2} x$ would lie in $MK_1 \cap r\partial B^H_2\cap F$, so $\frac{x}{\|x\|_2} \in S \cap F$, a contradiction.
Hence 
\[
r_k(MK_1) \le \sup_{x\in MK_1 \cap F}\|x\|_2\le r \lesssim   \norm{M}_{\pazocal{B}_n(K_1)}
  \sqrt{\frac{\log(2d/k)}{k}}.
  \qedhere
\]
\end{proof}

\section{From section radii to volume bounds}

Theorem~\ref{thm:main-lewis-geometric} is invariant under linear transformations, which implies the following corollary for arbitrary quotients of sections of the $\ell_1$ ball:

\begin{corollary}\label{cor:b1-section-estimate}
Let $F\subseteq\R^m$ be $d$-dimensional and let $M:\R^m \to\R^n$ be linear.  Then,
for every integer $k \in [1,d]$,
\[
  r_k(M(B_1^m\cap F))
  \lesssim \norm{M}_{\pazocal{B}_n(B_1^m\cap F)}\sqrt{\frac{\log(2d/k)}{k}}.
\]
\end{corollary}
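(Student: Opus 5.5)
The plan is to reduce Corollary~\ref{cor:b1-section-estimate} to Theorem~\ref{thm:main-lewis-geometric} by an explicit linear identification of $B_1^m\cap F$ with a body of the form \eqref{eq:isotropic}.

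\textbf{Step 1: identify the section with the polar of a zonotope.} Fix a linear isomorphism $V:\R^d\to F$ (say, an orthonormal basis of $F$ as columns). Then
\[
  V^{-1}(B_1^m\cap F)=\Big\{x\in\R^d:\sum_{i=1}^m|\ip{x}{A_i}|\le1\Big\},
\]
where $A_i\in\R^d$ is the $i$th row of $V$. This is exactly $Z^\circ$ for the zonotope $Z=A^\top B_\infty^m$, where $A$ has rows $A_i$. The matrix $A$ has rank $d$ because $V$ is injective. Rows with $A_i=0$ contribute nothing and can be discarded, since this only lowers $m$.

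\textbf{Step 2: pass to Lewis position.} Theorem~\ref{thm:lewis-rep} gives an invertible $T$ with $TZ^\circ=K_1$ of the form \eqref{eq:isotropic}. Set $W:=VT^{-1}:\R^d\to F$, which is a linear isomorphism with $W K_1=B_1^m\cap F$. Then $M(B_1^m\cap F)=(MW)K_1$. Applying Theorem~\ref{thm:main-lewis-geometric} to the linear map $MW:\R^d\to\R^n$ gives
\[
  r_k\bigl(M(B_1^m\cap F)\bigr)\lesssim \norm{MW}_{\pazocal B_n(K_1)}\sqrt{\tfrac{\log(2d/k)}{k}}.
\]

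\textbf{Step 3: invariance of the summing norm.} It remains to show $\norm{MW}_{\pazocal B_n(K_1)}=\norm{M}_{\pazocal B_n(B_1^m\cap F)}$, where the latter is computed inside $F$ (with $M$ restricted to $F$), following the convention for bodies in proper subspaces. Write $L:=B_1^m\cap F=WK_1$. Inside $F$ its polar is $L^\circ=(W^{-1})^\top K_1^\circ$, with $W^\top$ taken with respect to the inner product on $F$. Consequently
\[
  \pazocal M(L^\circ)=(W^{-1})^\top\pazocal M(K_1^\circ)W^{-1}.
\]
The condition $(M|_F)^\top M|_F\preceq A'$ with $A'=(W^{-1})^\top AW^{-1}$ is equivalent, after conjugating by $W$, to $(MW)^\top(MW)\preceq A$. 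The map $M\mapsto MW$ is therefore a bijection between $\pazocal B_n(L)$ and $\pazocal B_n(K_1)$, and the two gauges agree.

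\textbf{Main obstacle.} I expect Step 3 to be the only delicate point: one must keep track of the ambient-space conventions, namely that the polar and $\pazocal M$ are taken inside $F$ and that the transpose refers to the inner product on $F$. The rest is direct substitution, and $r_k$ needs no transformation at all, since it is computed for the same set $M(B_1^m\cap F)=(MW)K_1$ inside $\R^n$.
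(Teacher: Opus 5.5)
Your proposal is correct and follows the paper's proof step for step. You identify $B_1^m\cap F$ with $Z^\circ$ through an isomorphism $\R^d\to F$, pass to Lewis position via Theorem~\ref{thm:lewis-rep}, apply Theorem~\ref{thm:main-lewis-geometric} to the composed map $MW$, and use that the $\pazocal B_n$-gauge is invariant under this change of variables. Your Step~3 writes out the invariance check, with the polar and $\pazocal M$ taken inside $F$ and conjugation by $W$, which the paper leaves as ``straightforward to check''. Your remark about discarding zero rows also correctly matches the standing nonzero-row assumption behind Theorem~\ref{thm:lewis-rep}.
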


\begin{proof}
Take a linear isomorphism $A:\R^d\to F$ and set
\(K:=\{x\in\R^d:\norm{Ax}_1\le1\}\).
Thus $AK=B_1^m\cap F$. Theorem~\ref{thm:lewis-rep} gives an invertible $T$ with
$TK$ in Lewis position.  Applying Theorem~\ref{thm:main-lewis-geometric} to $TK$,
\[
  r_k(M(B_1^m\cap F))=r_k(MAT^{-1} TK)
  \lesssim
  \norm{MAT^{-1}}_{\pazocal B_n(TK)}
  \sqrt{\frac{\log(2d/k)}{k}}.
\]
It is straightforward to check that by definition
$\norm{MAT^{-1}}_{\pazocal B_n(TK)}=\norm{M}_{\pazocal B_n(AK)}$.
\end{proof}

Now we are ready to start moving towards volumetric bounds. We first pass to entropy numbers via Carl's inequality, then bound volume through covering numbers.

\begin{corollary}\label{cor:b1-covering-estimate}
Let $F\subseteq\R^m$ be $d$-dimensional and let $M: \R^m \to\R^n$ be linear.  Then,
for every integer $k \in [1,d]$,
\[
  e_k(M(B_1^m\cap F))
  \lesssim \norm{M}_{\pazocal{B}_n(B_1^m\cap F)}\sqrt{\frac{\log(2d/k)}{k}}.
\]
\end{corollary}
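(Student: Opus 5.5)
The plan is to combine Corollary~\ref{cor:b1-section-estimate} with Carl's inequality (Theorem~\ref{thm:carl-covering}) for $\alpha=1$ and $L=B_2^n$. Exactly as in Lemma~\ref{lem:relative-entropy-MK1-MKp}, the task then reduces to showing that the maximum over $s\le k$ of $s$ times the section-radius bound is dominated by its value at $s=k$.

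Write $K:=M(B_1^m\cap F)$ and $\mu:=\norm{M}_{\pazocal{B}_n(B_1^m\cap F)}$. The set $K$ may be lower dimensional. Both $e_j$ and $r_j$ can then be computed inside $H:=\spanop K$, which is harmless: covering numbers by Euclidean balls inside $H$ and inside $\R^n$ differ by at most a factor $2$ in radius. Alternatively, one can perturb $K$ to a full-dimensional body and pass to the limit. With this convention, Corollary~\ref{cor:b1-section-estimate} gives, for every $1\le s\le k\le d$,
\[
  r_s(K)\lesssim \mu\sqrt{\frac{\log(2d/s)}{s}} .
\]
Carl's inequality then yields
\[
  k\,e_k(K)\lesssim \max_{s\in[1,k]} s\,r_s(K)\lesssim \mu\max_{s\in[1,k]}\sqrt{s\log(2d/s)} .
\]

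It remains to bound the maximum. Put $u=k/s\ge1$. Then
\[
  \sqrt{s\log(2d/s)}=\sqrt{k}\,u^{-1/2}\bigl(\log(2d/k)+\log u\bigr)^{1/2}.
\]
Since $\log(2d/k)\ge1$, we have $\log(2d/k)+\log u\le \log(2d/k)\,(1+\log u)$. Moreover, $u^{-1/2}(1+\log u)^{1/2}$ is bounded uniformly for $u\ge1$. This is the $q=2$ case of the computation already carried out in Lemma~\ref{lem:relative-entropy-MK1-MKp}. Hence the maximum is $\lesssim\sqrt{k\log(2d/k)}$, and dividing by $k$ gives
\[
  e_k(K)\lesssim \mu\sqrt{\frac{\log(2d/k)}{k}} .
\]

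The only potentially delicate point is the dimension issue in applying Carl's inequality: it is stated for bodies in $\R^d$, while $K$ lives in a subspace $H$ of $\R^n$. I would handle this by applying Carl's inequality in $H$ with $L=B_2^H$. The quantity $r_k$ in Definition~\ref{def:section-radius} is already defined relative to $H$. The relevant covering numbers satisfy $N(K,tB_2^n)\le N(K,tB_2^H)$, since every ball of radius $t$ in $H$ is contained in a ball of radius $t$ in $\R^n$; hence $e_k(K)$ in $\R^n$ is at most the intrinsic one. Beyond this, the argument is a routine calculation.
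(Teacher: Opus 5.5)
Your proposal is correct and follows the paper's proof: you combine Corollary~\ref{cor:b1-section-estimate} with Carl's inequality for $\alpha=1$, $L=B_2^n$, and then show $\max_{j\le k} j\log(2d/j)\lesssim k\log(2d/k)$. The only differences are minor: the paper bounds that maximum using unimodality of $j\mapsto j\log(2d/j)$ (peak near $2d/e$) instead of your substitution $u=k/s$, and it does not spell out the lower-dimensional case, which you handle correctly by working in $H=\spanop K$ and using $N(K,tB_2^n)\le N(K,tB_2^H)$.
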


\begin{proof}

By Corollary~\ref{cor:b1-section-estimate}, for every $j \in [1,k]$, we have
\[
  j r_j(M(B_1^m\cap F))
  \lesssim \norm{M}_{\pazocal{B}_n(B_1^m\cap F)}\sqrt{j\log(2d/j)}.
\]
Since $j \mapsto j \log(2d/j)$ attains its maximum either at $j = k$ or at one of the integers closest to $2d/e$ if $k > 2d/e$, it follows that $j \log(2d/j) \lesssim k \log(2d/k)$. Therefore, by Theorem~\ref{thm:carl-covering} with $K=M(B_1^m\cap F)$,
$L=B_2^n$, and $\alpha=1$,
\[
  k e_k(M(B_1^m\cap F))
  \lesssim \max_{j \in [1, k]} j r_j(M(B_1^m\cap F)) \lesssim \norm{M}_{\pazocal{B}_n(B_1^m\cap F)}\sqrt{k\log(2d/k)}. \qedhere
\]
\end{proof}

\begin{lemma}\label{lem:john-Bnorm-geometric}
Let $K \subseteq \R^m$ be a symmetric convex set with span $H$, and let $M: \R^m \to\R^n$ be linear.  Assume that $MK$ is
full-dimensional in $\R^n$ and that its maximal-volume inscribed ellipsoid is
$B_2^n$.  Then
\[
  \norm{M}_{\pazocal{B}_n(K)}\le \sqrt n.
\]
\end{lemma}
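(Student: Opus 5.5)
We will exhibit an explicit matrix $A\in\pazocal M(K^\circ)$ with $M^\top M\preceq A$, where $K^\circ$ is the polar of $K$ taken inside $H$. The source of $A$ is the John decomposition of $MK$ (Theorem~\ref{thm:john-loewner}). Apply it to the body $MK\subset\R^n$, whose maximal-volume inscribed ellipsoid is $B_2^n$. This gives unit vectors $u_1,\ldots,u_N\in (MK)^\circ\cap\partial B_2^n$ and weights $c_i>0$ with $\sum_i c_iu_iu_i^\top=I_n$. Taking traces gives $\sum_i c_i=n$.

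Next we pull each contact vector back to $H$. For $y\in\R^n$, consider the functional $x\mapsto\ip{My}{x}$ restricted to $H$. Its value is $\ip{y}{Mx}$, and this is bounded by $1$ on $K$ whenever $y\in(MK)^\circ$. So the orthogonal projection $w_i:=P_H M^\top u_i$ lies in $K^\circ$, viewed inside $H$. Moreover, for $x\in H$ we have $\ip{x}{w_i}=\ip{Mx}{u_i}$.

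Now define
\[
A:=\frac1n\sum_i c_i\,n\, w_iw_i^\top=\sum_i c_i w_iw_i^\top .
\]
We want to view this as $n$ times a convex combination of the rank-one matrices $w_iw_i^\top$ with $w_i\in K^\circ$. Since $\sum_i c_i/n=1$, the matrix $A/n=\sum_i (c_i/n)w_iw_i^\top$ lies in $\pazocal M(K^\circ)$.

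The key identity holds on $H$:
\[
x^\top Ax=\sum_i c_i\ip{Mx}{u_i}^2=\|Mx\|_2^2=x^\top M^\top Mx .
\]
So $M^\top M= A$ as quadratic forms on $H$, and both the norm and $\pazocal B_n(K)$ are understood inside $H$. Hence $M^\top M\preceq n\cdot(A/n)$ with $A/n\in\pazocal M(K^\circ)$. In other words, $M/\sqrt n\in\pazocal B_n(K)$, which gives $\norm{M}_{\pazocal B_n(K)}\le\sqrt n$.

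The only delicate point is bookkeeping for $K$ of lower dimension than $\R^m$. Both the polar and the operator $M$ must be restricted to $H=\spanop K$, and $M^\top M$ must be interpreted as the form $x\mapsto\|Mx\|_2^2$ on $H$. Once this is handled, the argument is just the identity $\sum_i c_i=n$ together with the fact that John contact points lie in $(MK)^\circ$.
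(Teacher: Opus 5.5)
Your proof is correct and is essentially the paper's argument: pull the John contact points $u_i\in (MK)^\circ$ back via $M^\top$ to points of $K^\circ$, and use $\sum_i c_i=n$ to write $M^\top M=n\sum_i (c_i/n)(M^\top u_i)(M^\top u_i)^\top$ with the sum lying in $\pazocal M(K^\circ)$. Your projection $P_H$ only makes explicit the paper's convention that the polar of $K$ is taken inside $H$. One typo: the functional should be $x\mapsto\ip{M^\top y}{x}$, not $\ip{My}{x}$.
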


\begin{proof}
By Theorem~\ref{thm:john-loewner}, there exist points $u_1,\ldots,u_N\in (MK)^\circ\cap \partial B_2^n$ and weights
$c_1,\ldots,c_N>0$ such that
\[
  \sum_{i=1}^N c_i u_i u_i^\top=I_n.
\]
Taking traces gives $\sum_{i=1}^N c_i=n$. For each $i \in [N]$, $M^\top u_i\in K^\circ$, because
\[
  \sup_{x\in K}\abs{\ip{x}{M^\top u_i}}
  =\sup_{z\in MK}\abs{\ip{z}{u_i}}\le1.
\]
Therefore
\(P:=\sum_{i=1}^N \frac{c_i}{n} (M^\top u_i)(M^\top u_i)^\top \in \pazocal M(K^\circ)\),
and
\[
  M^\top M
  =M^\top I_n M
  =\sum_{i=1}^N c_i (M^\top u_i)(M^\top u_i)^\top
  =nP.
\]
Thus $M^\top M = (\sqrt{n})^2P$ with $P \in \pazocal M(K^\circ)$, so that $\norm{M}_{\pazocal{B}_n(K)}\le\sqrt{n}$.
\end{proof}

We now prove a volume-ratio bound for quotients of sections of $B_1^m$.

\begin{theorem}\label{thm:vr-quotient-geometric}
Let $F\subseteq\R^m$ be a $d$-dimensional subspace. Let $M:\R^m\to\R^n$ be a linear map with $1\le n\le d \le m$. Assume that $M(B_1^m\cap F)$ is full-dimensional
in $\R^n$. Then
\[
  \vr\bigl(M(B_1^m\cap F)\bigr)\lesssim \sqrt{\log\Big(\frac{2d}{n}\Big)}.
\]
\end{theorem}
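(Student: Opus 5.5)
The plan is to use linear invariance of the volume ratio to put the body in John position. Then Lemma~\ref{lem:john-Bnorm-geometric} controls the $2$-summing norm of the map, and Corollary~\ref{cor:b1-covering-estimate} at the scale $k=n$ turns that control into a volume bound. Write $K:=M(B_1^m\cap F)\subseteq\R^n$.

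First, since $\vr$ is invariant under invertible linear maps of $\R^n$, choose $T:\R^n\to\R^n$ invertible so that the maximal-volume inscribed ellipsoid of $TK$ is $B_2^n$. Replace $M$ by $M':=TM:\R^m\to\R^n$. This is still linear, and $M'(B_1^m\cap F)=TK$ is full-dimensional. Applying Lemma~\ref{lem:john-Bnorm-geometric} to the symmetric convex set $B_1^m\cap F$ (with span $F$) and the map $M'$ gives $\norm{M'}_{\pazocal B_n(B_1^m\cap F)}\le\sqrt n$.

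Second, apply Corollary~\ref{cor:b1-covering-estimate} to $F$ and $M'$ with $k:=n$, which is admissible since $1\le n\le d$. This yields
\[
  e_n(TK)\lesssim \sqrt n\cdot\sqrt{\frac{\log(2d/n)}{n}}=\sqrt{\log(2d/n)}.
\]
By definition of $e_n$, for any $t$ slightly larger than $e_n(TK)$, $TK$ is covered by at most $2^{n-1}$ translates of $tB_2^n$. Hence
\[
  \vol_n(TK)\le 2^{n-1}t^n\vol_n(B_2^n).
\]

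Third, since $\pazocal E_{TK}=B_2^n$, this gives $\vr(TK)\le 2^{(n-1)/n}t\le 2t$. Letting $t\downarrow e_n(TK)$ we get $\vr(K)=\vr(TK)\lesssim\sqrt{\log(2d/n)}$.

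I do not expect a serious obstacle, since the heavy lifting is in Corollary~\ref{cor:b1-covering-estimate}. The one point to check is bookkeeping: the $\pazocal B_n$-norm in Lemma~\ref{lem:john-Bnorm-geometric} and in Corollary~\ref{cor:b1-covering-estimate} must be taken with respect to the same body $B_1^m\cap F$, with the polar understood inside its span $F$ as per the paper's convention. This holds because both results are stated for a linear map on $\R^m$ and the set $B_1^m\cap F$. It is also important that the choice $k=n$ is exactly what makes the $\sqrt n$ from John position cancel the $1/\sqrt k$ decay.
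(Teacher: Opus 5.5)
Your proposal is correct and follows the paper's proof essentially step for step: put the body in John position, use Lemma~\ref{lem:john-Bnorm-geometric} to get $\norm{M}_{\pazocal B_n(B_1^m\cap F)}\le\sqrt n$, apply Corollary~\ref{cor:b1-covering-estimate} with $k=n$, and convert the cover by $2^{n-1}$ translates into $\vr\le 2e_n\lesssim\sqrt{\log(2d/n)}$. Your explicit replacement of $M$ by $TM$ and the limit $t\downarrow e_n(TK)$ only spell out what the paper leaves implicit.
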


\begin{proof}[Proof of Theorem~\ref{thm:vr-quotient-geometric}]
We may assume without loss of generality that the John ellipsoid of $M(B_1^m\cap F)$
is $B_2^n$, otherwise apply an invertible linear transformation.  By Lemma~\ref{lem:john-Bnorm-geometric},
$\norm{M}_{\pazocal{B}_n(B_1^m\cap F)}\le\sqrt n$. Corollary~\ref{cor:b1-covering-estimate} with $k=n$ gives
\[
  e_n(M(B_1^m\cap F))
  \lesssim \sqrt n\sqrt{\frac{\log(2d/n)}{n}}
  = \sqrt{\log(2d/n)}.
\]
Thus, for every $t>e_n(M(B_1^m\cap F))$, $N\left(M(B_1^m\cap F),t B_2^n\right)\le2^{n-1}$. Therefore,
\[
  \vol_n\bigl(M(B_1^m\cap F)\bigr)
  \le2^{n-1}t^n\vol_n(B_2^n).
\]
Taking $n$th roots yields
\[
   \vr\bigl(M(B_1^m\cap F)\bigr) = \left(\frac{\vol_n\bigl(M(B_1^m\cap F)\bigr)}{\vol_n(B_2^n)}\right)^{1/n}
  \le 2e_n(M(B_1^m\cap F))
  \lesssim \sqrt{\log(2d/n)}. \qedhere
\]
\end{proof}

The form needed for vector balancing is the following volume bound.

\begin{corollary}\label{cor:qvr}
Let $F\subseteq\R^m$ be a $d$-dimensional subspace.  Let
$M:\R^m\to\R^n$ be a linear map whose corresponding matrix has coefficients in $[-1,1]$, with $1 \le n \le d$. Then
\[
  \vol_n\bigl(M(B_1^m\cap F)\bigr)^{1/n}
  \lesssim \sqrt{\frac{\log(2d/n)}{n}}.
\]
\end{corollary}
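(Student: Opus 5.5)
Let $K:=M(B_1^m\cap F)$. We may assume $K$ is full-dimensional in $\R^n$, since otherwise its $n$-dimensional volume is zero and there is nothing to prove. Theorem~\ref{thm:vr-quotient-geometric} then gives
\[
\vol_n(K)^{1/n}\lesssim \sqrt{\log(2d/n)}\,\vol_n(\pazocal E_K)^{1/n}.
\]
It remains to show $\vol_n(\pazocal E_K)^{1/n}\lesssim 1/\sqrt n$. Since $\vol_n(B_2^n)^{1/n}\asymp 1/\sqrt n$, this amounts to showing that the John ellipsoid of $K$ has geometric-mean semi-axis length $O(1)$.

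We use only the containment $K\subseteq M B_1^m$. The image $MB_1^m=\conv\{\pm Me_j\}$ has vertices equal to the columns of $M$. Each column lies in $[-1,1]^n$, hence in $\sqrt n B_2^n$, so
\[
\pazocal E_K\subseteq K\subseteq MB_1^m\subseteq \sqrt n\,B_2^n .
\]
This crude inclusion is not enough by itself: it only gives semi-axes $\le\sqrt n$. The sharper tool is the cube: $MB_1^m\subseteq [-1,1]^n=B_\infty^n$, because every column has entries in $[-1,1]$ and $B_\infty^n$ is convex. The maximal-volume ellipsoid inside the cube $B_\infty^n$ is $B_2^n$. Any ellipsoid $\pazocal E\subseteq K\subseteq B_\infty^n$ therefore satisfies $\vol_n(\pazocal E)\le\vol_n(B_2^n)$.

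Combining the two estimates,
\[
\vol_n(K)^{1/n}\lesssim\sqrt{\log(2d/n)}\,\vol_n(B_2^n)^{1/n}\lesssim\sqrt{\frac{\log(2d/n)}{n}},
\]
which is the claim.

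The step to double-check is the inclusion $M(B_1^m\cap F)\subseteq B_\infty^n$. For $x\in B_1^m$ we have $|(Mx)_i|\le\sum_j|M_{ij}||x_j|\le\|x\|_1\le 1$, so it holds. The John-ellipsoid maximality for the cube follows by symmetry: the maximal ellipsoid is invariant under coordinate sign changes and permutations, so it is a ball, and the largest ball in $B_\infty^n$ is $B_2^n$. With these in hand there is no real obstacle; all the difficulty has already been absorbed into Theorem~\ref{thm:vr-quotient-geometric}.
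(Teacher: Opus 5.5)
Your proof is correct and follows the paper's argument: you reduce to the full-dimensional case, apply Theorem~\ref{thm:vr-quotient-geometric}, use $M(B_1^m\cap F)\subseteq B_\infty^n$, and bound the John ellipsoid's volume by $\vol_n(B_2^n)$. The only difference is how you get $\vol_n(\pazocal E)\le\vol_n(B_2^n)$. You argue that $B_2^n$ is the John ellipsoid of the cube, via signed-permutation symmetry and uniqueness of the maximal ellipsoid. The paper instead writes $\pazocal E=QB_2^n$, notes that the rows of $Q$ have Euclidean norm at most $1$, and applies Hadamard's inequality, which avoids appealing to uniqueness.
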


\begin{proof}[Proof of Corollary~\ref{cor:qvr}]
If the image $M(B_1^m\cap F)$ is not full-dimensional, then its $n$-dimensional volume is zero and the conclusion is immediate.  Assume from now on that the image is full-dimensional.

By Theorem~\ref{thm:vr-quotient-geometric}, if $\pazocal E$ is the John
ellipsoid of $M(B_1^m\cap F)$, then
\begin{equation}\tag{$\ast$}\label{eq:vr-main-use}
  \left(\frac{\vol_n\bigl(M(B_1^m\cap F)\bigr)}{\vol_n(\pazocal E)}\right)^{1/n}
  \lesssim \sqrt{\log(2d/n)}.
\end{equation}
The assumption on $M$ implies $M(B_1^m\cap F)\subseteq B_\infty^n$: if
$b\in B_1^m\cap F$, then for each $j$,
\[
  \abs{(Mb)_j}=\abs{\ip{m_j}{b}}
  \le \norm{m_j}_\infty\norm{b}_1\le1.
\]
Thus $\pazocal E\subseteq B_\infty^n$. Write $\pazocal E = QB_2^n$, with $Q$
invertible, and write the rows of $Q$ as $q_1,\ldots,q_n$.  The inclusion
$QB_2^n\subseteq B_\infty^n$ implies that every $q_i$ has Euclidean norm at
most $1$, because
\[
  \norm{q_i}_2= \sup_{\norm{x}_2\le1}\abs{\ip{q_i}{x}}\le1.
\]
Hadamard's determinant inequality gives
\[
  \abs{\det Q}\le\prod_{i=1}^n\norm{q_i}_2\le1.
\]
Therefore $\vol_n(QB_2^n)\le\vol_n(B_2^n)$. Using the standard estimate
$\vol_n(B_2^n)^{1/n}\lesssim 1/\sqrt n$ and combining with \eqref{eq:vr-main-use} yields
\[
  \vol_n\bigl(M(B_1^m\cap F)\bigr)^{1/n}
  \lesssim \sqrt{\log(2d/n)}\,\vol_n(\pazocal E)^{1/n}
  \lesssim \sqrt{\frac{\log(2d/n)}{n}}. \qedhere
\]
\end{proof}

\section{Balancing vectors in any zonotope}

We now proceed towards the proof of Theorem~\ref{thm:main-vb}.  Let
$Z = A^\top B_\infty^m \subseteq\R^d$ be a zonotope, which we assume without loss of generality is full-dimensional (otherwise we may reduce $d$ after an invertible linear transformation) so that $A:\R^d\to\R^m$ has rank $d$, and set
$F:=A\R^d\subseteq\R^m$.

Let $v_1,\ldots,v_n\in Z$, where $n \in [1,d]$, and take
$u_1,\ldots,u_n\in B_\infty^m$ with $v_i=A^\top u_i$ for $i \in [n]$.  For
$S\subseteq[n]$, define $M_S:\R^m\to\R^S$ by
$(M_Sb)_i:=\ip{u_i}{b}$ and $V_S:\R^d\to\R^S$ by
$(V_Sx)_i:=\ip{v_i}{x}$, for $i\in S$.  Then $V_S=M_SA$ and
$V_S^\top a=\sum_{i\in S}a_i v_i=A^\top M_S^\top a$.  Define the coordinate
body
\[
  K_S:=\{a\in\R^S:V_S^\top a\in Z\}.
\]
Equivalently, if $V:\R^d\to\R^n$ is given by $(Vx)_i=\ip{v_i}{x}$ and
$K:=\{a\in\R^n:V^\top a\in Z\}$, then $K_S = K \cap \R^S$.

\begin{lemma}\label{lem:polar}
For every nonempty $S\subseteq[n]$,
\[
  K_S=\bigl(M_S(B_1^m\cap F)\bigr)^\circ.
\]
\end{lemma}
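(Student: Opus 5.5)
The plan is to compute the support function of $L_S:=M_S(B_1^m\cap F)$ and compare it with the gauge of $K_S$. Since $L_S$ is a compact symmetric convex set in $\R^S$ (a linear image of a compact symmetric convex set), its polar is $L_S^\circ=\{a\in\R^S:\sup_{y\in L_S}|\ip{a}{y}|\le1\}$. This holds even if $L_S$ is not full-dimensional, in which case both sides may be unbounded, but the identity of sets still makes sense. So it suffices to show, for every $a\in\R^S$,
\[
  \sup_{b\in B_1^m\cap F}\abs{\ip{a}{M_Sb}}\le 1 \iff V_S^\top a\in Z .
\]

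First, I rewrite the left side. We have $\ip{a}{M_Sb}=\ip{M_S^\top a}{b}$. Every $b\in F$ has the form $b=Ax$ with $x\in\R^d$, and $b\in B_1^m$ means $\sum_i|\ip{x}{A_i}|\le1$, i.e.\ $x\in Z^\circ$ (the description of $Z^\circ$ recalled in Section~\ref{subsec:lewis-position}). Since $A$ has rank $d$, the map $x\mapsto Ax$ is a bijection from $Z^\circ$ onto $B_1^m\cap F$. Hence
\[
  \sup_{b\in B_1^m\cap F}\abs{\ip{M_S^\top a}{b}}
  =\sup_{x\in Z^\circ}\abs{\ip{A^\top M_S^\top a}{x}}
  =\sup_{x\in Z^\circ}\abs{\ip{V_S^\top a}{x}}
  =\norm{V_S^\top a}_{Z^{\circ\circ}} .
\]
Here I use $V_S^\top a=A^\top M_S^\top a$, which is noted just before the lemma.

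Second, I invoke the bipolar theorem. $Z$ is a full-dimensional symmetric convex body, so $Z^{\circ\circ}=Z$, and the quantity above is $\norm{V_S^\top a}_Z$. Its being at most $1$ is therefore equivalent to $V_S^\top a\in Z$, using that $Z$ is closed. This is exactly $a\in K_S$, which proves the equality.

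There is no real obstacle. The only points needing care are the conventions. The polar of the possibly lower-dimensional set $L_S$ is to be taken in $\R^S$ as the set defined by the inequality, and the definition in the preliminaries handles this. The identification $B_1^m\cap F=AZ^\circ$ relies on injectivity of $A$ and on the explicit formula for $Z^\circ$.
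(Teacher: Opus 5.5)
Your proof is correct and follows essentially the same route as the paper. Both identify $Z^\circ=\{x:\norm{Ax}_1\le1\}$ with $B_1^m\cap F$ via the injective map $x\mapsto Ax$, use $V_S^\top=A^\top M_S^\top$, and use $\norm{z}_Z=\sup_{x\in Z^\circ}\abs{\ip{z}{x}}$ (the bipolar theorem) to see that $\norm{V_S^\top a}_Z$ equals the support function of $M_S(B_1^m\cap F)$ at $a$. You read the polar as the inequality-defined subset of $\R^S$, and that is the reading the paper relies on, for instance in Proposition~\ref{prop:vl}, where it concludes that $K_S$ is unbounded when $M_S(B_1^m\cap F)$ is lower-dimensional.
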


\begin{proof}
The polar of $Z=A^\top B_\infty^m$ is $Z^\circ=\{x\in\R^d:\norm{Ax}_1\le1\}$. Indeed,
\[
  \sup_{z\in B_\infty^m}\ip{A^\top z}{x}
  =\sup_{z\in B_\infty^m}\ip{z}{Ax}
  =\norm{Ax}_1.
\]
Since $A$ has rank $d$, the map $x\mapsto Ax$ identifies
$Z^\circ$ with $B_1^m\cap F$.

For $y\in\R^S$,
\begin{align*}
  \norm{V_S^\top y}_Z
  &=\sup_{x\in Z^\circ}\abs{\ip{V_S^\top y}{x}} \\
  &=\sup_{x:\norm{Ax}_1\le1}\abs{\ip{y}{V_Sx}} \\
  &=\sup_{x:\norm{Ax}_1\le1}\abs{\ip{y}{M_SAx}} \\
  &=\sup_{b\in B_1^m\cap F}\abs{\ip{y}{M_Sb}}.
\end{align*}
Thus $y\in K_S$ iff $\abs{\ip{y}{z}}\le1$ for every $z\in M_S(B_1^m\cap F)$, i.e. $y\in\bigl(M_S(B_1^m\cap F)\bigr)^\circ$.
\end{proof}

\begin{proposition}\label{prop:vl}
For every nonempty
$S\subseteq[n]$, $k:=|S|$, one has
\[
  \vol_k(K_S)^{1/k}
  \gtrsim
  \frac{1}{\sqrt{k\log(2d/k)}}.
\]
\end{proposition}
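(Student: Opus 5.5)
By Lemma~\ref{lem:polar}, $K_S$ is the polar of $L_S:=M_S(B_1^m\cap F)$ inside $\R^S$. The plan is to combine the inverse Santal\'o inequality (Theorem~\ref{thm:inverse-santalo}) with the upper volume bound of Corollary~\ref{cor:qvr} applied to $L_S$.

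First we must check that $L_S$ is full-dimensional in $\R^S$, so that $K_S$ is a bounded symmetric convex body. If $L_S$ lies in a proper subspace of $\R^S$, then $K_S$ contains a line and is unbounded. In that case $\vol_k(K_S)=\infty$ and the claim holds trivially. Otherwise $K_S$ is a symmetric convex body in $\R^S$ with $K_S^\circ=L_S$.

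Theorem~\ref{thm:inverse-santalo} in dimension $k$ then gives
\[
  \vol_k(K_S)^{1/k}\gtrsim \frac{1}{k\,\vol_k(L_S)^{1/k}}.
\]
Next we apply Corollary~\ref{cor:qvr} with $n$ replaced by $k$ and $M$ replaced by $M_S:\R^m\to\R^S\cong\R^k$. Its hypotheses hold for the following reasons:
\begin{itemize}
\item the rows of $M_S$ are the $u_i$, $i\in S$, which lie in $B_\infty^m$, so all matrix coefficients are in $[-1,1]$;
\item $F=A\R^d$ is $d$-dimensional because $A$ has rank $d$;
\item $1\le k\le n\le d$.
\end{itemize}
The corollary yields $\vol_k(L_S)^{1/k}\lesssim \sqrt{\log(2d/k)/k}$. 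Substituting this into the display gives
\[
  \vol_k(K_S)^{1/k}\gtrsim \frac{1}{k}\sqrt{\frac{k}{\log(2d/k)}}=\frac{1}{\sqrt{k\log(2d/k)}},
\]
as claimed.

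All the substantive work sits in Corollary~\ref{cor:qvr}, so no step here is a real obstacle. The only care needed is the degenerate case where $L_S$ is not full-dimensional, and making sure the dimension fed to the Santal\'o inequality and to the corollary is $k=|S|$ rather than $n$.
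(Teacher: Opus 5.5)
Your proposal is correct and follows essentially the same route as the paper: dispose of the degenerate case where $M_S(B_1^m\cap F)$ is lower-dimensional (so $K_S$ is unbounded), then combine Lemma~\ref{lem:polar}, Corollary~\ref{cor:qvr} applied with $k=|S|\le n\le d$, and the inverse Santal\'o inequality in dimension $k$. Your hypothesis checks, in particular that the rows $u_i\in B_\infty^m$ give coefficients in $[-1,1]$, match what the paper uses.
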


\begin{proof}
Fix $S\subseteq[n]$ and put $k=|S|$. If $M_S(B_1^m\cap F)$ is lower-dimensional, then Lemma~\ref{lem:polar} implies that
$K_S$ is unbounded, and the claim is trivial. Now assume that $M_S(B_1^m\cap F)$ is full-dimensional.  By Lemma~\ref{lem:polar} we have $K_S=(M_S(B_1^m\cap F))^\circ$.
By Corollary~\ref{cor:qvr}, since $u_1, \dots, u_n \in B_\infty^m$,
\[
  \vol_k(M_S(B_1^m\cap F))^{1/k}
  \lesssim \sqrt{\frac{\log(2d/k)}{k}}.
\]
Therefore, the inverse Santal\'o inequality (Theorem~\ref{thm:inverse-santalo}) yields
\[
  \vol_k(K_S)^{1/k}
  =
  \vol_k((M_S(B_1^m\cap F))^\circ)^{1/k}
  \gtrsim
  \frac{1}{\sqrt{k\log(2d/k)}}. \qedhere
\]
\end{proof}

We now convert Proposition~\ref{prop:vl} into a partial coloring.

\begin{lemma}\label{lem:active-pc}
For every $y\in[-1,1]^n$, there exists a polynomial-time computable $y'\in[-1,1]^n$ such that at least $n/2$ coordinates of $y'$ lie in
$\{\pm1\}$ and
\[
  \left\|\sum_{i=1}^n (y_i-y'_i)v_i\right\|_Z
  \lesssim
  \sqrt{n\log(2d/n)}.
\]
\end{lemma}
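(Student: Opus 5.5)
The plan is to apply Corollary~\ref{thm:volume-pc} to a rescaled copy of the body $K=\{a\in\R^n:V^\top a\in Z\}$. Set $\lambda:=c\sqrt{n\log(2d/n)}$ for a suitable universal constant $c$, and let $L:=\lambda K$. Then $\|a\|_L=\lambda^{-1}\|a\|_K=\lambda^{-1}\|V^\top a\|_Z$. So if Corollary~\ref{thm:volume-pc} applies to $L$ with $y\in[-1,1]^n$, it returns a polynomial-time computable $y'$ with at least $n/2$ coordinates in $\{\pm1\}$ and $\|y-y'\|_L\lesssim1$. This means $\|\sum_i(y_i-y'_i)v_i\|_Z=\|V^\top(y-y')\|_Z\lesssim\lambda$, which is the claim. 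One caveat: $K$ may be unbounded, i.e.\ not a convex body, if the $v_i$ are linearly dependent. I would handle this either by intersecting with a large box $R\,B_\infty^n$, which only enlarges nothing relevant and keeps volume hypotheses valid for large $R$, or by noting that the Reis--Rothvoss results apply to closed symmetric convex sets by a limiting argument. Symmetry and convexity of $K$ are clear.

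The main step is to verify the hypothesis $\vol_{|S|}(L_S)\ge1$ for all $S\subseteq[n]$. For $S=\emptyset$ this holds by convention. For nonempty $S$ with $k=|S|$, note $L_S=\lambda K_S$. Proposition~\ref{prop:vl} gives $\vol_k(L_S)^{1/k}=\lambda\vol_k(K_S)^{1/k}\ge c'\lambda/\sqrt{k\log(2d/k)}$. It therefore suffices that $k\log(2d/k)\le n\log(2d/n)$ for all $1\le k\le n\le d$, after which we choose $c\ge1/c'$. The function $k\mapsto k\log(2d/k)$ has derivative proportional to $\ln(2d/k)-1$, which is positive for $k<2d/e$. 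So it is increasing there, and for $2d/e\le k\le n\le d$ both values are within constant factors of $d$. In either case $k\log(2d/k)\lesssim n\log(2d/n)$, and the loss is absorbed into $c$.

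I expect no real obstacle beyond this monotonicity check and the unboundedness technicality. If unboundedness is a concern, the cleanest fix is to replace $L$ by $L\cap RB_\infty^n$ with $R$ huge. Each coordinate section of this set still has volume at least $1$ once $R\ge\lambda\cdot\mathrm{poly}$, because sections of $L_S$ contain bounded pieces of large volume. The norm with respect to the truncated body dominates $\|\cdot\|_L$, so the conclusion is unaffected.
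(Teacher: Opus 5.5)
Your proposal is correct and matches the paper's proof: scale $K$ by a constant multiple of $\sqrt{n\log(2d/n)}$, combine Proposition~\ref{prop:vl} with $k\log(2d/k)\lesssim n\log(2d/n)$ for $1\le k\le n\le d$ to get $\vol_{|S|}(\lambda K_S)\ge 1$ for every $S$, and then invoke Corollary~\ref{thm:volume-pc}. You also spell out the monotonicity check and the case where $K$ is unbounded, both of which the paper leaves implicit (it simply treats an unbounded $K_S$ as trivially having large volume); your truncation by $R\,B_\infty^n$ with $R$ large is a valid way to handle that case.
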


\begin{proof}
Apply Proposition~\ref{prop:vl} and Corollary~\ref{thm:volume-pc} after scaling $K$ by $\sqrt{n \log(2d/n)}$.
\end{proof}

\begin{proof}[Proof of Theorem~\ref{thm:main-vb}]
Set $y^{(0)}=\mathbf{0}$.  Given $y^{(j)}$, let $I_j$ be the set of coordinates not yet in
$\{\pm1\}$ and put $n_j=|I_j|$.  If $n_j>0$, apply
Lemma~\ref{lem:active-pc} to the active vectors $(v_i)_{i\in I_j}$ and the
point $y^{(j)}|_{I_j}$.  Keeping the fixed coordinates unchanged gives
$y^{(j+1)}\in[-1,1]^n$ with $n_{j+1}\le n_j/2$ and
\[
  \left\|\sum_{i=1}^n\bigl(y_i^{(j)}-y_i^{(j+1)}\bigr)v_i\right\|_Z
  \lesssim
  \sqrt{n_j\log(2d/n_j)}.
\]
Let $t$ be the first index with $n_t=0$.  Such a $t$ exists since the active
set size is at least halved at each step.  Then all coordinates of
$y^{(t)}$ are signs; write $x_i=y_i^{(t)}$.

By the triangle inequality and symmetry of $Z$,
\[
  \left\|\sum_{i=1}^n x_i v_i\right\|_Z
  \le
  \sum_{j=0}^{t-1}
  \left\|\sum_{i=1}^n\bigl(y_i^{(j+1)}-y_i^{(j)}\bigr)v_i\right\|_Z
  \lesssim
  \sum_{j=0}^{t-1}\sqrt{n_j\log(2d/n_j)}.
\]
Since \(n_j\le n2^{-j}\) for $j \in [0,t)$,
\[
  n_j\log(2d/n_j)
  \lesssim
  n2^{-j}\log(2d2^j/n)
  =
  n2^{-j}(\log(2d/n)+j)
  \le
  n\log(2d/n)\,2^{-j}(j+1).
\]
Thus the sum may be upper bounded by
\[
  \sum_{j=0}^{t-1}\sqrt{n_j\log(2d/n_j)}
  \lesssim
  \sqrt{n\log(2d/n)}\sum_{j\ge0}2^{-j/2}\sqrt{j+1}
  \lesssim
  \sqrt{n\log(2d/n)}.
\]
Hence the signed sum lies in
$C\sqrt{n\log(2d/n)}\,Z$ for a universal constant $C$.
\end{proof}

\section{Open problems}\label{sec:open-problems}

We end with two natural open problems. The first is a variant of Theorem~\ref{thm:vr-quotient-geometric} that would yield a $O(\sqrt{d})$ partial coloring bound for the Matrix Spencer setting.

\begin{conjecture} Is it true that for any linear map $M:\R^{d \times d} \to\R^d$ one has 
\[\vr(M\pazocal S^d_1) \lesssim 1,
\]
where $\pazocal S^d_1 := \{A \in \mathbb{R}^{d\times d} : \|A\|_* \le 1\}$ is the Schatten-1 ball?
\end{conjecture}

Finally, we restate a version of Schechtman's 1987 sparsification problem. 

\begin{conjecture}\label{conj:schechtman-sparsification}
Is there a universal constant $C$ such that the following holds?  Let
$Z\subseteq\R^d$ be a $d$-dimensional zonotope and let $\eps \in (0,1)$.  Then there is
a zonotope $\widetilde Z = \tilde{A}^\top [-1,1]^{\widetilde{m}} \subseteq\R^d$ generated by $\widetilde{m} \le C d/\eps^2$ segments such that
\[
  \widetilde Z\subseteq Z\subseteq (1+\eps)\widetilde Z.
\]
\end{conjecture}
\section{Acknowledgments}

The author used GPT-5.5 Pro during the development of this work to explore proof
strategies, translate between operator theory and convex geometry, and assist with verification. Every AI-generated proof was verified and rewritten by the author, who takes full responsibility for the paper.

\appendix

\section{Proofs for the covariance-width section estimates}
\label{app:covariance-width-section-proofs}

\begin{proof}[Proof of Proposition~\ref{prop:covariance-width-section}]
Let $H:=\spanop K$ and take $x_0\in K$ with $\|x_0\|_2= \sup_{x \in K} \|x\|_2$.  Since $K$ is symmetric,
\[
  \wg(K)=\E\sup_{x\in K}\ip{g}{x}
  \ge \E_{g \sim N(0, I_H)} |\ip{g}{x_0}|
  =  \|x_0\|_2 \cdot \E_{\gamma \sim N(0,1)} |\gamma|.
\]
Thus $r_1(K) = \|x_0\|_2 \lesssim \wg(K)$. If
$k \le m_{\mathrm E}$, then
\[
  r_k(K)\le r_1(K)\lesssim \wg(K)\lesssim \frac{\wg(K)}{\sqrt{k}}.
\]
Assume $k > m_{\mathrm E}$. We will apply Gordon's escape theorem. Set $r:=\sqrt{2m_{\mathrm E}} \frac{\wg(K)}{\sqrt{k}}$ and \[S:=(\tfrac{1}{r} K) \cap \partial B^H_2 \subseteq \partial B^H_2.\] Then $\wg(S)\le r^{-1}\wg(K)$, and by the choice of $r$ we have
$k-1\ge m_{\mathrm E}\max\{1,\wg(S)^2\}$. Theorem~\ref{thm:escape} yields a subspace
$F\subseteq H$ with $\codim_HF \le k-1 < k$ and $S \cap F =\varnothing$.

If $K\cap F$ contained a point $x$ with $\|x\|_2 \ge r$, then by convexity of $K$, the
point $\frac{r}{\|x\|_2} x$ would lie in $K \cap r\partial B^H_2\cap F$, so $\frac{x}{\|x\|_2} \in S \cap F$, a contradiction.
Thus $r_k(K) \le \sup_{x\in K\cap F}\|x\|_2\le r$.
\end{proof}

\begin{proof}[Proof of Lemma~\ref{lem:Bnorm-convex}]
Let $M,N\in \pazocal{B}_n(K)$ and $\theta \in [0,1]$. Take
$A,B\in\pazocal M(K^\circ)$ such that $M^\top M\preceq A$, $N^\top N\preceq B$. For every $x\in\R^d$,
\[
  \|\bigl(\theta M+(1-\theta)N\bigr)x\|_2
  \le \theta\|Mx\|_2+(1-\theta)\|Nx\|_2
  \le \theta \sqrt{x^\top A x}+(1-\theta)\sqrt{x^\top Bx}.
\]
By Cauchy-Schwarz,
\[
  \left(\theta \sqrt{x^\top A x}+(1-\theta)\sqrt{x^\top Bx}\right)^2
  \le (\theta + 1-\theta) \cdot x^\top\bigl(\theta A+(1-\theta)B\bigr)x.
\]
Since $\pazocal M(K^\circ)$ is convex,
$C:=\theta A+(1-\theta)B$ belongs to $\pazocal M(K^\circ)$.  Hence
\[
  \bigl(\theta M+(1-\theta)N\bigr)^\top
  \bigl(\theta M+(1-\theta)N\bigr)
  \preceq C \implies \theta M+(1-\theta)N\in\pazocal{B}_n(K). \qedhere
\]
\end{proof}

\begin{proof}[Proof of Lemma~\ref{lem:Bnorm-dominates-radius-one}]
Let $t = \norm{M}_{\pazocal{B}_n(K)}$ and take $A\in\pazocal M(K^\circ)$ such that
$M^\top M\preceq t^2A$.  Write
$A=\sum_i\lambda_i y_i y_i^\top$ with $y_i\in K^\circ$, $\lambda \ge \mathbf{0}$, and
$\sum_i\lambda_i=1$.  If $x\in K$, then
\[
  \|Mx\|_2^2
  =\ip{x}{M^\top Mx}
  \le t^2\ip{x}{Ax}
  =t^2\sum_i\lambda_i|\ip{x}{y_i}|^2
  \le t^2,
\]
since $y_i\in K^\circ$.  Hence $r_1(MK) = \sup_{x\in K}\|Mx\|_2\le t= \norm{M}_{\pazocal{B}_n(K)}$.
\end{proof}

\begin{proof}[Proof of Lemma~\ref{lem:width-covariance-domination}]
Let $t=\norm{M}_{\pazocal{B}_n(K)}$ and take $A\in\pazocal M(K^\circ)$ such that
  $M^\top M\preceq t^2A$. Let $g \sim N(0, I_n)$.  Then \[\wg(MK)=\E\sup_{x\in K}\ip{g}{Mx}
  =\E\|M^\top g\|_{K^\circ}.\]
The vector $M^\top g$ is Gaussian with covariance $M^\top M$.  Let $\eta$ be an
independent Gaussian vector with covariance $t^2A-M^\top M$.  Then
$\zeta:=M^\top g+\eta$ is Gaussian with covariance $t^2A$.  Since
$\E(\zeta\mid M^\top g)=M^\top g$, Jensen's inequality gives
\[
  \E\|M^\top g\|_{K^\circ}
  =\E\|\E(\zeta\mid M^\top g)\|_{K^\circ}
  \le \E\|\zeta\|_{K^\circ}.
\]
Because $\zeta$ has the same distribution as $t \alpha$ with
$\alpha \sim N(0,A)$, we obtain
\[
  \wg(MK) =  \E\|M^\top g\|_{K^\circ} \le t\,\E_{\alpha \sim N(0,A)}\|\alpha\|_{K^\circ}
  \le t\,\Gamma(K) =  \norm{M}_{\pazocal{B}_n(K)} \Gamma(K). \qedhere
\]
\end{proof}

\begin{proof}[Proof of Lemma~\ref{lem:cover-to-width}]
Choose a cover $K\subseteq\bigcup_i(z_i+tL)$ with at most $2^{k-1}$ translates.
Discarding empty translates, choose $x_i\in K\cap(z_i+tL)$.  Since $L$ is symmetric,
\[
  K\subseteq \bigcup_i(x_i+2tL).
\]
Indeed, if $y,x_i\in z_i+tL$, then $y-x_i\in tL-tL=2tL$.  For
$g\sim N(0,I_n)$,
\[
  \sup_{y\in K}\ip{g}{y}
  \le \max_i\ip{g}{x_i}+2t\sup_{z\in L}\ip{g}{z}.
\]
It remains to estimate the discrete Gaussian maximum.  Let $N \le 2^{k-1}$ be the number
of chosen points.  For each $i$, the random variable $\ip{g}{x_i}$ is
centered Gaussian with variance $\|x_i\|_2^2\le r^2$.  We use the
 Gaussian moment-generating formula
\[
  \E e^{\lambda \ip{g}{x_i}}=\exp\left(\frac{\lambda^2\|x_i\|_2^2}{2}\right) \le \exp\left(\frac{\lambda^2 r^2}{2}\right)
\]
for $\lambda = \frac{\sqrt{2 \ln N}}{r}$; we obtain by convexity of $z \mapsto e^{\lambda z}$ that
\[
  \E\max_i \ip{g}{x_i}
  \le {\frac{1}{\lambda}}\ln\sum_{i=1}^N \E e^{\lambda \ip{g}{x_i}}
  \le {\frac{\ln N}{\lambda}}+{\frac{\lambda r^2}{2}} = r \cdot \sqrt{2\ln N} \lesssim r \sqrt{k}.
\]
Therefore, $\wg(K) \lesssim r \sqrt{k} + 2t \wg(L)$, as claimed.
\end{proof}

\end{document}